\newtheorem{theorem}{Theorem}[section]
\newtheorem{proposition}[theorem]{Proposition}
\newtheorem{lemma}[theorem]{Lemma}
\newtheorem{corollary}[theorem]{Corollary}
\theoremstyle{definition}
\newtheorem{remark}[theorem]{Remark}
\newtheorem{example}[theorem]{Example}
\numberwithin{equation}{section}
\numberwithin{equation}{section}
\DeclareMathOperator{\tri}{Tri}
\begin{document}

\title[Weighted Jordan homomorphisms]{Weighted Jordan homomorphisms}
\author{M. Bre\v sar}

\address{M. Bre\v sar, Faculty of Mathematics and Physics,  University of Ljubljana,
 and Faculty of Natural Sciences and Mathematics, University 
of Maribor, Slovenia}
 \email{matej.bresar@fmf.uni-lj.si}

\author{M. L. C. Godoy}
\address{M. L. C. Godoy, Departamento de An\' alisis
Matem\' atico, Fa\-cul\-tad de Ciencias, Universidad de Granada,
 Granada, Spain} 
 \email{mgodoy@ugr.es}

\thanks{The first author was supported by the Slovenian Research Agency (ARRS) Grant P1-0288. The second author was suported by MCIU/AEI/FEDER Grant PGC2018-093794-B-I00, Junta de Andaluc\' ia Grant FQM-185, MIU Grant FPU18/00419 and MIU Grant EST19/00466.}

\keywords{Weighted Jordan homomorphism, zero Jordan product preserving map, zero Jordan product determined ring, matrix ring, prime ring}

\subjclass[2020]{15A30, 16R20, 16R60, 16N60, 16S50, 16W10, 17C50}

\begin{abstract}
Let $A$ and $B$  be   unital rings. An additive map $T:A\to B$ is called a  weighted Jordan homomorphism if $c=T(1)$ is an invertible central  element  and $cT(x^2) = T(x)^2$ for all $x\in A$.
 We provide assumptions, which are in particular fulfilled when $A=B=M_n(R)$ with $n\ge 2$ and  $R$  any unital ring with $\frac{1}{2}$, under which every surjective additive map $T:A\to B$ with the property that $T(x)T(y)+T(y)T(x)=0$ whenever $xy=yx=0$ is a  weighted Jordan homomorphism. Further, we show that if $A$ is a prime ring with {\rm char}$(A)\ne 2,3,5$, then a bijective additive map $T:A\to A$ is a weighted Jordan homomorphism provided that there exists an additive map $S:A\to A$ such that
  $S(x^2)=T(x)^2$ for all $x\in A$.
\end{abstract}
\maketitle

\section{Introduction}\label{s1}

Let $A$ and $B$ be  unital rings. 
 Recall that an additive map $\Phi:A\to B$ is called a {\em Jordan homomorphism} if $\Phi(x\circ y)=\Phi(x)\circ \Phi(y)$ for all $x,y\in A$, where $x\circ y$ stands for the Jordan product $xy+yx$ of  $x$ and $y$.
We  say that an additive map $T:A\to B$ is a {\em weighted Jordan homomorphism} if $c=T(1)$ is an invertible element lying in the center  of $B$ and $x\mapsto c^{-1}T(x)$ is a Jordan homomorphism, that is,
$$cT(x\circ y) = T(x)\circ T(y)\quad(x,y\in A).$$
 Weighted Jordan homomorphisms can be also defined for rings without unity, see \cite[p.~121]{zpdbook}. However, we will work only with unital rings in this paper.
 
  Weighted Jordan homomorphisms naturally appear in some preserver problems. In \cite{C}, Chebotar, Ke, Lee, and Zhang used functional identities to prove that if
  $R$ is a  unital ring with $\frac{1}{2}$ and  $A=M_n(R)$ with $n\ge 4$ (i.e., $A$ is the ring of $n\times n$ matrices over $R$), then a surjective additive map $T:A\to A$
  which preserves zero Jordan products (i.e.,  $T(x)\circ T(y)=0$ whenever $x\circ y=0$)  is 
 a weighted Jordan homomorphism. We also mention more recent papers \cite{Cat, CHK} which are close to \cite{C} and  also involve weighted Jordan homomorphisms.
  Further,  
 Alaminos,  Bre\v sar, Extremera, and Villena   \cite{ABEV1}  proved that if $A$ and $B$ are $C^*$-algebras and $T:A\to B$ is a continuous linear map with the property that  for all $x,y\in A$, 
 \begin{equation}\label{condf} xy=yx=0\implies T(x)\circ T(y)=0,\end{equation} 
 then $T$ is a weighted Jordan homomorphism.
The proof was based on the theory of zero product determined algebras which is surveyed in the recent book \cite{zpdbook}.

In Section \ref{s2}, we show that 
a surjective additive map $T\colon A\to B$  satisfying \eqref{condf} 
is a  weighted Jordan homomorphism provided that 
 the ring $A$  is additively spanned by Jordan products of its idempotents and $B$ is any ring with $\frac{1}{2}$ (Theorem \ref{tidem}). The condition on idempotents is fulfilled in any matrix ring $M_n(R)$ with $n\ge 2$,  so this theorem  yields a  generalization  and completion of  the aforementioned  result of \cite{C} (Corollary \ref{c1}). In the first step of the proof of Theorem \ref{tidem}, which  is similar to that in \cite{ABEV1}, we reduce the problem to the situation where there exists an additive map
 $S:A\to B$ such that
\begin{equation}\label{s} S(x\circ y)=T(x)\circ T(y)\quad(x,y\in A). \end{equation}  
  In the second step, which  is based on  elementary  but tricky calculations, we show that \eqref{s} implies that $T$ is a weighted Jordan homomorphism.

  The condition \eqref{s} is 
  a simple, natural generalization of  the condition that a map is a (weighted) Jordan homomorphism, and we find it
  interesting in its own right. Our interest  also
  stems from the recent paper \cite{B} in which this condition
   unexpectedly occurred  when studying problems that are rather unrelated to those in this paper. We therefore believe that \eqref{s} deserves a systematic treatment. In Section \ref{s3}, we show that if $A$ is a prime ring with char$(A)\ne 2,3,5$ and $T:A\to A$
   is a bijective additive map for which there exists an additive map $S:A\to A$ such that \eqref{s} holds, then $T$ is a weighted Jordan homomorphism (Theorem \ref{mt3}). The proof is more  complex than the proof in Section \ref{s2}. It combines the results from the theory of  functional identities, the theory of polynomial identities, the  classical structure theory or rings, and linear algebra.

\section{Maps satisfying $xy=yx=0 \implies T(x) \circ T(y) = 0$}\label{s2}

The proof of the main theorem of  this section depends on some ideas presented in  the book \cite{zpdbook}. However,
we cannot refer directly to the results in this book
since it is (mostly) written in the context of algebras over fields while we wish to work in the context of rings. The following is the ring version of Theorem~2.15 (in conjunction with Proposition~1.3) and  Theorem~3.23 (in conjunction 
 with Remark~3.24) from \cite{zpdbook}.
 
 \begin{proposition}\label{pz}
 Let $A$ be a unital ring, let $B$ be an additive group, and let $\varphi:A\times A\to B$ be a biadditive map. If $A$ is generated as a ring by idempotents, then:
 \begin{enumerate}
 \item[{\rm (a)}] If $\varphi(x,y)=0$ whenever $x,y\in A$ are such that $xy=0$,  then $\varphi(x,y)=\varphi(xy,1)$ for all $x,y\in A$.
 \item[{\rm (b)}] If $\varphi$ is symmetric and $\varphi(x,y)=0$ whenever  $x,y\in A$ are such that $xy=yx=0$,  then $2\varphi(x,y)=\varphi(x\circ y,1)$ for all $x,y\in A$.
 \end{enumerate}
 \end{proposition}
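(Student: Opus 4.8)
The plan is to handle both parts by first establishing the asserted identity on idempotents and then propagating it to products of idempotents, exploiting that the hypothesis \emph{$A$ is generated as a ring by idempotents} means exactly that $A$ is the additive span of all products $e_1\cdots e_m$ of idempotents (note $1$ is itself an idempotent). Since $\varphi$ is biadditive, in each part it suffices to verify the identity when the free argument ranges over such products and then invoke additivity.

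For part (a) I would first record two elementary \emph{sliding} relations. Fix an idempotent $e$ and put $f=1-e$. From $(xe)(fy)=xef y=0$ and the hypothesis we get $\varphi(xe,y)=\varphi(xe,ey)$, while from $(xf)(ey)=0$ we get $\varphi(x,ey)=\varphi(xe,ey)$; together these yield
\begin{equation}
\varphi(xe,y)=\varphi(x,ey)\qquad(x,y\in A,\ e=e^2).
\tag{$\ast$}
\end{equation}
Similarly $(xf)e=0$ gives $\varphi(x,e)=\varphi(xe,e)$ and $(xe)f=0$ gives $\varphi(xe,1)=\varphi(xe,e)$, whence
\begin{equation}
\varphi(x,e)=\varphi(xe,1)\qquad(x\in A,\ e=e^2).
\tag{$\ast\ast$}
\end{equation}
For a product $y=e_1\cdots e_m$ of idempotents I then use $(\ast)$ repeatedly to move the idempotents one at a time out of the second slot, $\varphi(x,e_1\cdots e_m)=\varphi(xe_1,e_2\cdots e_m)=\cdots=\varphi(xe_1\cdots e_{m-1},e_m)$, and finish with $(\ast\ast)$ applied to the single remaining idempotent, obtaining $\varphi(x,y)=\varphi(xy,1)$. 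Biadditivity and the spanning remark extend this to all $x,y$. This part I expect to be routine; the only point requiring care is that $(\ast)$ genuinely uses the one-sided hypothesis of (a).

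For part (b) the one-sided relations are unavailable, so the first goal is the Jordan analogue of $(\ast\ast)$,
\begin{equation}
2\varphi(x,e)=\varphi(x\circ e,1)\qquad(x\in A,\ e=e^2).
\tag{$\dagger$}
\end{equation}
I would prove $(\dagger)$ via the Peirce decomposition $x=exe+exf+fxe+fxf$ with $f=1-e$. The corner $fxf$ is Jordan-orthogonal to $e$, so $\varphi(fxf,e)=0$; likewise $\varphi(exe,f)=0$, giving $\varphi(exe,e)=\varphi(exe,1)$. The two mixed corners are the delicate point: writing $u=exf$ one has $u^2=0$, and $e$ and $e+u$ are both idempotents, so expanding $\varphi(e+u,1-e-u)=0$ and using $\varphi(e,1-e)=0$, $\varphi(u,u)=0$ and symmetry forces $\varphi(u,e)=\varphi(u,f)$, i.e.\ $\varphi(exf,1)=2\varphi(exf,e)$; the corner $fxe$ is symmetric. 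Since $x\circ e=2\,exe+exf+fxe$, summing these contributions gives $(\dagger)$.

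The hard part will be to propagate $(\dagger)$ from idempotents to products. Let $J=\{y\in A: 2\varphi(x,y)=\varphi(x\circ y,1)\ \text{for all}\ x\}$; this is an additive subgroup, it contains every idempotent by $(\dagger)$, and the perturbation above shows it contains every off-diagonal corner $eAf$ (since $e+u,e\in J$ for $u\in eAf$). Decomposing a product $w=e_1\cdots e_m$ with respect to $e=e_1$ gives $w=ewe+ewf$ with $ewf\in eAf\subseteq J$, so $w\in J$ reduces to the \emph{diagonal} element $ewe\in eAe$, equivalently to showing that a product of two idempotents lies in $J$. This cannot follow from additivity alone, because such corner elements need not be $\mathbb{Z}$-combinations of idempotents (already in $M_2(k)$ over a field $k$, $\lambda E_{11}$ is not). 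I expect this diagonal/product case to be the genuine obstacle, and my plan to overcome it is to push the idempotent-perturbation technique one step further: from $q^2=q$ one has $pqp-(pqp)^2=pq(1-p)qp$, whose right-hand side is a product of square-zero off-diagonal elements already in $J$, and combining such relations with the vanishing $\varphi(u,u)=0$ transfers the identity from the off-diagonal data to the diagonal. Iterating this is the technical heart and parallels the proof of Theorem~3.23 in \cite{zpdbook}; once every product of idempotents is shown to lie in $J$, biadditivity and the spanning remark complete part (b).
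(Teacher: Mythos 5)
Your part (a) is correct and coincides with the paper's own route: the paper proves (a) by citing \cite{zpdbook} (Lemma 2.2 and Theorem 2.3), whose argument is exactly your sliding relations $(\ast)$, $(\ast\ast)$ followed by the spanning argument. In part (b), your Peirce-decomposition proof of $(\dagger)$ and the observation that the corners $eAf$, $fAe$ lie in $J$ are also correct. The genuine gap is the step you yourself flag as the ``technical heart'': propagating membership in $J$ to products of idempotents. The mechanism you propose cannot work as stated. The identity $pqp-(pqp)^2=pq(1-p)qp$ is circular, since $(pqp)^2$ is an element of exactly the same kind as $pqp$ (not a shorter product of idempotents), so ``iterating'' never terminates; and knowing that $pq(1-p)qp$ is a \emph{product} of two elements of $J$ gives nothing, because $J$ is only an additive subgroup --- closure of $J$ under any multiplication is precisely what is in question, not something you may use. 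As it stands, part (b) is unproved.

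The gap can in fact be closed with the ingredients you already have, but by Peirce-decomposing the \emph{first} argument instead of manipulating the product. Put $\psi(x,y)=2\varphi(x,y)-\varphi(x\circ y,1)$; it is symmetric because $\varphi$ is, so $y\in J$ if and only if $\psi(y,x)=0$ for all $x$. Induct on $m$: let $w=e_2\cdots e_m\in J$, $e=e_1$, $f=1-e$. Since $ewf,fwe\in J$, also $ewe+fwf=w-ewf-fwe\in J$. Now verify $\psi(x,ewe)=0$ corner by corner in $x$: for $x\in eAf\cup fAe$ this follows from $x\in J$ and the symmetry of $\psi$; for $x\in fAf$ it follows from the hypothesis of (b), since $x(ewe)=(ewe)x=0$ and $x\circ ewe=0$; and for $x\in eAe$ one writes $\psi(x,ewe)=\psi(x,ewe+fwf)-\psi(x,fwf)$, where the first term vanishes because $ewe+fwf\in J$ and the second again by two-sided orthogonality. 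Hence $ewe\in J$, so $e_1\cdots e_m=ewe+ewf\in J$, completing the induction, and $J=A$ follows by additivity. Note finally that even with this repair your route differs from the paper's: the paper obtains (b) \emph{from} part (a), following the proof of Theorem 3.23 in \cite{zpdbook}, whereas your argument for (b) never invokes (a); this is worth knowing, but the issue to fix is the unproved propagation step, not the choice of route.
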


The proof of (a) is literally the same as the proof of Lemma 2.2 and Theorem 2.3 from  \cite{zpdbook}. Using (a), one can prove (b) by simply following the proof of Theorem 3.23 from \cite{zpdbook}.  (We will actually  need only (b), but we stated also (a) to explain the proof of (b)).

We continue with a simple lemma which will be also needed  in  the next section.

\begin{lemma}\label{l0} Let $A$ and $B$ be unital rings and let $T:A\to B$ be a surjective  additive map satisfying
	\begin{equation}\label{txy}
	2T(x)\circ T(y) = T(x \circ y) \circ c \quad (x,  y\in A),
	\end{equation}
where $c=T(1)$. Assume that  $B$ is $2$-torsion free (i.e., $2b=0$ with $b\in B$ implies $b=0$) and denote the center of $B$ by $Z$. 
The following conditions are equivalent:
\begin{enumerate}
\item[(i)] $T$ is a  weighted Jordan homomorphism.
\item[(ii)]  $c^2\in Z$.
\item[(iii)] $c\in Z$.
\end{enumerate}
\end{lemma}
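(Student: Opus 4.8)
The plan is to prove the stated equivalences by a cycle of implications, treating (i)$\Rightarrow$(iii) and (iii)$\Rightarrow$(ii) as immediate and concentrating all the work on the two substantial directions (iii)$\Rightarrow$(i) and (ii)$\Rightarrow$(iii). The implication (i)$\Rightarrow$(iii) is nothing but the definition of a weighted Jordan homomorphism, which forces $c=T(1)\in Z$, while (iii)$\Rightarrow$(ii) is clear since $c\in Z$ gives $c^2\in Z$. Once (iii)$\Rightarrow$(i) and (ii)$\Rightarrow$(iii) are established, all three statements are equivalent. Throughout I would first use surjectivity of $T$ to fix an element $e\in A$ with $T(e)=1$, and I would repeatedly invoke the fact that a $2$-torsion free group is automatically $2^k$-torsion free, so that factors such as $2$ and $4$ may be cancelled.

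For (iii)$\Rightarrow$(i), assume $c\in Z$. Expanding the right-hand side of \eqref{txy} gives $T(x\circ y)\circ c=2cT(x\circ y)$, so \eqref{txy} reads $2\,T(x)\circ T(y)=2cT(x\circ y)$, and cancelling $2$ yields the Jordan relation $T(x)\circ T(y)=cT(x\circ y)$. It then remains only to see that $c$ is invertible: putting $x=y=e$ in \eqref{txy} and using $T(e)=1$ gives $4=4\,cT(e^2)$, whence $cT(e^2)=T(e^2)c=1$. Thus $c$ is central and invertible and $x\mapsto c^{-1}T(x)$ is a Jordan homomorphism, i.e.\ $T$ is a weighted Jordan homomorphism.

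The heart of the argument is (ii)$\Rightarrow$(iii). Setting $y=e$ in \eqref{txy} and using $T(x)\circ 1=2T(x)$, I obtain the identity $4T(x)=T(xe+ex)\circ c$ for every $x\in A$; note that this step uses no centrality whatsoever. The key observation is then the elementary commutator identity $[c,\,u\circ c]=[c^2,u]$, valid for every $u\in B$ (it follows at once from $c(uc+cu)-(uc+cu)c=c^2u-uc^2$). Applying it with $u=T(xe+ex)$ and using the hypothesis $c^2\in Z$, I get $4[c,T(x)]=[c,\,T(xe+ex)\circ c]=[c^2,T(xe+ex)]=0$, so $[c,T(x)]=0$ for every $x$. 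Since $T$ is surjective, $c$ commutes with all of $B$, that is, $c\in Z$.

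I expect the only genuine subtlety to be locating this specialization: the computation collapses precisely because $4T(x)$ can be written as a Jordan product with $c$ (the $y=e$ instance of \eqref{txy}), and because any expression $u\circ c$ commutes with $c$ as soon as $c^2$ is central. Every remaining ingredient — cancelling powers of $2$ via $2$-torsion freeness, and passing from ``commutes with each $T(x)$'' to ``central'' via surjectivity — is routine bookkeeping. Assembling the four implications closes the cycle and gives the equivalence of (i), (ii), and (iii).
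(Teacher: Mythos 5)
Your proposal is correct and follows essentially the same route as the paper: the two substantive implications are handled identically, with (ii)$\Rightarrow$(iii) resting on the same computation $4[T(x),c]=[T(x\circ e)\circ c,\,c]=[T(x\circ e),c^2]=0$ (the paper writes $b$ for your $e$) and (iii)$\Rightarrow$(i) obtaining invertibility of $c$ from $4=4cT(e^2)$ exactly as in the paper. The only difference is cosmetic: you take (i)$\Rightarrow$(iii) and (iii)$\Rightarrow$(ii) as the trivial links, whereas the paper uses (i)$\Rightarrow$(ii), which changes nothing of substance.
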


\begin{proof} (i)$\implies$(ii).  This is a consequence of 
the definition of a  weighted Jordan homomorphism.

 (ii)$\implies$(iii). 
 Let $b\in A$ be such that $T(b)=1$. Using \eqref{txy} we see that 
  $c^2\in Z$ implies   $$4[T(x),c]=[T(x\circ b)\circ c,c] = [T(x\circ b),c^2]=0\quad(x\in A)$$
  (here, as usual, $[x,y]$ stands for $xy-yx$).
  Since $T$ is surjective and $B$ is $2$-torsion free, $c\in Z$ follows.
  
(iii)$\implies$(i).  Asssuming that  $c\in Z$
it follows from \eqref{txy} that $4 = 4T(b^2)c$. As $B$ is $2$-torsion free this shows that 
  $c$ is invertible with $c^{-1}=T(b^2)$.  Since 
 \eqref{txy} implies that  $T(x)\circ T(y) = cT(x \circ y) $, 
$T$ is a weighted Jordan homomorphism.
\end{proof}

We will say that a ring $A$ is {\em additively spanned by Jordan products of its idempotents} if $A$ 
is equal to its additive subgroup generated 
by elements of the form $ e\circ f$ where $e$ and $f$ are idempotents. By saying that $B$ is a ring with $\frac{1}{2}$ we mean that $1+1$ is an invertible element in $B$; such a ring is of course $2$-torsion free.
(We remark that under the assumption that $\frac{1}{2}\in B$, the  condition \eqref{txy} is equivalent to the condition \eqref{s} pointed out in Section \ref{s1}, see the beginning of Section \ref{s3}).

We are now ready to state the main result of this section.

\begin{theorem}\label{tidem}
	Let $A$ and $B$ be  unital rings. Assume that  $A$ is additively spanned by Jordan products of its idempotents and  assume that  $\frac{1}{2}\in B$.
	If  $T:A\to B$ is a  surjective additive map such that for all $x,y\in A$,
	\begin{equation}\label{zj}
	xy=yx=0 \implies T(x) \circ T(y) = 0,
\end{equation}
then $T$ is a weighted Jordan homomorphism.
\end{theorem}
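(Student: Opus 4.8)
The plan is to run the two-step strategy outlined in the introduction. First I would turn the zero-Jordan-product condition \eqref{zj} into the bilinear identity \eqref{txy}. Consider the map $\varphi\colon A\times A\to B$ defined by $\varphi(x,y)=T(x)\circ T(y)$; it is biadditive and symmetric, and by \eqref{zj} it vanishes whenever $xy=yx=0$. Since $A$ is additively spanned by the elements $e\circ f$ with $e,f$ idempotents, each such element lies in the subring generated by idempotents, so $A$ is in particular generated as a ring by idempotents and Proposition \ref{pz}(b) applies. It yields $2\varphi(x,y)=\varphi(x\circ y,1)$, that is, $2T(x)\circ T(y)=T(x\circ y)\circ c$ with $c=T(1)$, which is exactly \eqref{txy}. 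Because $\frac12\in B$, the ring $B$ is $2$-torsion free, so Lemma \ref{l0} is at our disposal: it reduces the whole theorem to proving that $c^2$ lies in the center $Z$ of $B$ (equivalently, that $c\in Z$).

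The core of the argument is therefore to show $c^2\in Z$, and since $T$ is surjective and $A$ is additively spanned by the Jordan products $e\circ f$, it suffices to check that $c^2$ commutes with every $T(e\circ f)$. I would first dispose of a single idempotent $e$, writing $s=T(e)$. Taking $x=y=e$ in \eqref{txy} (equivalently, applying \eqref{zj} to the orthogonal pair $e,1-e$) gives $s\circ c=2s^2$. Using this twice one gets $s^2c=cs^2$ and then
$$sc^2=(sc)c=(2s^2-cs)c=2s^2c-csc=2cs^2-csc=c(2s^2-sc)=c(cs)=c^2s,$$
so $[s,c^2]=0$ for every idempotent $e$. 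Next, for two idempotents $e,f$ set $s=T(e)$, $t=T(f)$ and $w=T(e\circ f)$. By \eqref{txy} we have $wc+cw=2(s\circ t)$, and a direct computation turns this into $[w,c^2]=2\,[\,s\circ t,\,c\,]$. Thus everything comes down to proving the single relation $[\,T(e)\circ T(f),\,c\,]=0$ for all idempotents $e,f$.

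This last relation is the main obstacle, and it is where the hypotheses must be used more deeply. The relations available from single idempotents only control $s\circ c$ and $t\circ c$; a short calculation shows $[\,s\circ t,\,c\,]=2\big([s,t^2]+[t,s^2]\big)$, which the single-idempotent identities alone do not force to vanish. To supply the missing identities I would exploit \eqref{zj} for elements that are orthogonal to a given idempotent: for any $a\in A$ the element $(1-e)a(1-e)$ satisfies $e\cdot(1-e)a(1-e)=(1-e)a(1-e)\cdot e=0$, so $T(e)\circ T\big((1-e)a(1-e)\big)=0$, and symmetrically $T(1-e)\circ T(eae)=0$; moreover any two elements of the ``off-diagonal'' corner $eA(1-e)$ multiply to zero in both orders, giving further Jordan relations among their $T$-images. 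Feeding the Peirce components of $f$ relative to $e$ into these identities, and combining them with $s\circ c=2s^2$, should pin down $[\,s\circ t,\,c\,]=0$ by the elementary but delicate bookkeeping announced in the introduction.

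Once $c^2\in Z$ is established, Lemma \ref{l0} finishes the proof at no extra cost: its implication (ii)$\Rightarrow$(iii) upgrades $c^2\in Z$ to $c\in Z$, and (iii)$\Rightarrow$(i) then shows that $c$ is invertible and that $T(x)\circ T(y)=cT(x\circ y)$, i.e.\ that $T$ is a weighted Jordan homomorphism. The only genuinely hard point is the centrality step of the previous paragraph; the rest is a clean assembly of Proposition \ref{pz}(b) and Lemma \ref{l0}.
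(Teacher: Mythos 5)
Your first half is correct and follows the paper exactly: Proposition \ref{pz}(b) applied to $\varphi(x,y)=T(x)\circ T(y)$ gives \eqref{txy}, Lemma \ref{l0} reduces everything to the centrality of $c$ (or $c^2$), and your computation that $[T(e),c^2]=0$ for every idempotent $e$ (from $T(e)\circ c=2T(e)^2$) is sound and is also the paper's step. The gap is in the step you yourself flag as the obstacle: you reduce the theorem to the relation $[\,T(e)\circ T(f),\,c\,]=0$ for all idempotents $e,f$ --- a commutator with $c$ itself, not with $c^2$ --- and then you do not prove it. The proposed route via Peirce components of $f$ relative to $e$ is only a plan (``should pin down \dots by delicate bookkeeping''); no identity produced by \eqref{zj} on the corners $eA(1-e)$, $(1-e)Ae$, $(1-e)A(1-e)$ is actually combined to yield $[s\circ t,c]=0$, and it is not at all clear this can be made to work, since as you note the single-idempotent relations only control commutators with $c^2$. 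So the heart of the theorem is missing.

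The paper's proof avoids ever needing a commutator with $c$ of this kind. Introduce $W\colon B\to B$, $W(x)=\frac12\,x\circc$; excuse me, $W(x)=\frac{1}{2}\,x\circ c$. Then \eqref{txy} reads $W(T(x\circ y))=T(x)\circ T(y)$, and applying $W$ once more and commuting with $c$ gives
\begin{equation*}
[\,W(W(T(x\circ y))),\,c\,]=\tfrac{1}{2}\,[\,T(x)\circ T(y),\,c^2\,].
\end{equation*}
For $x=e$, $y=f$ idempotents the right-hand side vanishes \emph{for free}, because $[T(e),c^2]=[T(f),c^2]=0$ already forces $[\,T(e)\circ T(f),\,c^2\,]=0$; no relation $[\,T(e)\circ T(f),\,c\,]=0$ is ever needed. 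The second ingredient you are missing is the surjectivity of $W$: since $\frac{1}{2}\in B$, the identity $y=\frac{1}{2}\big((y+1)^2-y^2-1\big)$ shows $B$ is additively spanned by squares, and $W(T(x^2))=T(x)^2$ then shows $W$ is onto. Hence every element of $B$ is an integer combination of elements $W(W(T(e\circ f)))$, each of which commutes with $c$, so $c\in Z$ outright, and Lemma \ref{l0} finishes. In short: instead of trying to commute $c^2$ against the spanning set $T(e\circ f)$ (which needs the unavailable relation), commute $c$ against the new spanning set $W(W(T(e\circ f)))$ (which needs only the available one). Your proposal as written does not close this loop.
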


\begin{proof} 
Define $\varphi:A\times A\to B$ by $$\varphi(x,y)=T(x)\circ T(y).$$
Note that $\varphi$ is symmetric and that \eqref{zj} shows that $\varphi(x,y)=0$ whenever $xy=yx=0$. Since 
our assumption on $A$ in particular implies that  $A$ is generated by idempotents, it follows from Proposition \ref{pz}\,(b) that $2\varphi(x,y) = \varphi(x\circ y,1)$ for all $x,y\in A$. That is, \eqref{txy} holds (where $c=T(1)$).
In other words, we have    \begin{equation}\label{txyz}W(T(x\circ y)) = T(x)\circ T(y)\quad(x,y\in A),\end{equation}
where  $W \colon B \to B$ is defined by $$W(x) = \frac{1}{2} x \circ c.$$
Setting $x=y$ in \eqref{txyz} we obtain
	\begin{equation}\label{eq.WT}
	W(T(x^2))  = T(x)^2\quad (x\in A).
	\end{equation}
 From $y= \frac{1}{2}\big((y+1)^2 - y^2 - 1^2\big)$ we see 
	that $B$ is additively spanned by squares of its elements.  Therefore, 
 $W$ is surjective since $T$ is surjective. Further, \eqref{txyz} shows that   
 $$W(W(T(x\circ y))) =W(T(x)\circ T(y)) = \frac{1}{2} (T(x)\circ T(y))\circ c$$
 and hence 
 \begin{equation}\label{mm}
 [W(W(T(x\circ y))),c] = \frac{1}{2}[T(x)\circ T(y),c^2]\quad (x,y\in A).
 \end{equation}

	Let $e \in A$ be an idempotent. By \eqref{eq.WT},
	\begin{equation} \label{eq.TE}
	\dfrac{1}{2} T(e) \circ c = W(T(e)) = W(T(e^2)) = T(e)^2.
	\end{equation}
	This implies that $[T(e),T(e) \circ c] =0$, i.e.,
	$
	[T(e)^2, c] = 0.
	$
Hence,  \eqref{eq.TE} shows that $[T(e) \circ c, c]=0$, i.e.,
	$$
	[T(e), c^2] = 0.
	$$
Together with \eqref{mm}, this yields
$$[W(W(T(e \circ f))), c]=0$$
for all idempotents $e$ and $f$. Since $W$ and $T$ are surjective, our assumption on $A$ implies that $c$ belongs to the center of $B$. The desired conclusion that  $T$ is a weighted Jordan homomorphism now follows from
Lemma \ref{l0}. 
\end{proof}

The following corollary generalizes   \cite[Theorem 1.1]{C}; in particular, it shows that the assumption that $n\ge 4$ in this theorem is redundant.

\begin{corollary}\label{c1}Let $R$ be a  unital ring with $\frac{1}{2}$ and let $A=M_n(R)$, $n\ge 2$. If  a surjective additive map $T:A\to A$ satisfies \eqref{zj} (in particular, if $T$ preserves zero Jordan products), then $T$ is a weighted Jordan homomorphism.
\end{corollary}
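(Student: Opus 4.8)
The plan is to deduce the corollary directly from Theorem \ref{tidem}. The map $T$ is surjective and satisfies \eqref{zj} by hypothesis, and since $\frac{1}{2}\in R$ we have $\frac{1}{2}\in B=M_n(R)$; thus the only hypothesis of Theorem \ref{tidem} that still needs verification is that $A=M_n(R)$ is additively spanned by Jordan products of its idempotents. Once this is in place the theorem applies verbatim. I would therefore devote the entire argument to proving that the additive subgroup $G$ of $M_n(R)$ generated by all $e\circ f$, with $e,f$ idempotent, equals $M_n(R)$. Writing $E_{ij}$ for the matrix units, it suffices to show that $rE_{ij}\in G$ for every $r\in R$ and all $i,j$, since these elements additively span $M_n(R)$.

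For the off-diagonal units this is immediate and uses neither $n\ge 2$ beyond the existence of a second index nor $\frac{1}{2}$: for $i\ne j$ the matrix $E_{ii}+rE_{ij}$ is idempotent (because $E_{ij}E_{ij}=0$ and $E_{ij}E_{ii}=0$), and a short computation gives $(E_{ii}+rE_{ij})\circ E_{jj}=rE_{ij}$. Hence $rE_{ij}\in G$ for all $i\ne j$ and all $r\in R$.

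The main obstacle is the diagonal part, that is, producing $rE_{ii}$: the symmetry of the Jordan product couples the $(i,i)$- and $(j,j)$-entries, so the obvious idempotents yield only even multiples such as $2E_{ii}$ or symmetric combinations $r(E_{ii}+E_{jj})$, which do not separate the two diagonal slots over $\mathbb{Z}$. The device I would use is the identity $E_{ii}\circ u = 2u_{ii}E_{ii}+\sum_{k\ne i}\big(u_{ik}E_{ik}+u_{ki}E_{ki}\big)$, valid for \emph{any} idempotent $u$; here the $(j,j)$-coefficient is automatically $0$, and every term in the sum is off-diagonal and hence already lies in $G$. Consequently $2u_{ii}E_{ii}\in G$, and it remains only to realize an arbitrary element of $R$ as the $(i,i)$-entry of some idempotent. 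Working in the $\{i,j\}$-block, the matrix supported there with block $\begin{pmatrix} a & 1 \\ a-a^2 & 1-a\end{pmatrix}$ is idempotent for every $a\in R$ (a direct verification of the four block equations, using only that $1$ and $a-a^2$ commute with $a$), and its $(i,i)$-entry is $a$. Therefore $2aE_{ii}\in G$ for all $a\in R$, and since $\frac{1}{2}\in R$ forces $2R=R$, we obtain $rE_{ii}\in G$ for all $r$. Combined with the off-diagonal case this gives $G=M_n(R)$, and the corollary follows from Theorem \ref{tidem}. It is worth noting that $\frac{1}{2}$ is used essentially only in this final passage, exactly matching the hypothesis $\frac{1}{2}\in R$.
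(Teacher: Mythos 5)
Your proof is correct and follows essentially the same route as the paper: both deduce the corollary from Theorem \ref{tidem} by showing $M_n(R)$ is additively spanned by Jordan products of idempotents, handling off-diagonal and diagonal matrix units separately via explicit idempotents supported on a $2\times 2$ block, with $\frac{1}{2}$ invoked to absorb the factor $2$ on the diagonal. Your choices differ only in detail (e.g.\ your off-diagonal identity $rE_{ij}=(E_{ii}+rE_{ij})\circ E_{jj}$ is a single Jordan product and avoids $\frac{1}{2}$ there, and you use the idempotent with block $\begin{pmatrix} a & 1 \\ a-a^2 & 1-a\end{pmatrix}$ where the paper uses $x(e_{ii}+e_{ji})+(1-x)(e_{ij}+e_{jj})$), but the structure of the argument is the same.
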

\begin{proof}By $e_{ij}$ we denote the standard matrix units and by $xe_{ij}$ the matrix whose $(i,j)$ entry is $x\in R$ all other entries are $0$. Of course, each $e_{ii}$ is an idempotent. Let $i\ne j$.  Note  that
 $$xe_{ij} + e_{ii}\quad\mbox{and}\quad x(e_{ii}+e_{ji}) + (1-x)(e_{ij}+e_{jj})$$ are idempotents and
	\[
	\begin{split}
		&xe_{ij} = (xe_{ij}+e_{ii}) - e_{ii}, \\
		&xe_{ii} = \dfrac{1}{2}\big(\big( x(e_{ii}+e_{ji}) + (1-x)(e_{ij}+e_{jj}) \big) \circ e_{ii} - xe_{ji} - (1-x)e_{ij} \big).
	\end{split}
	\]
	Since $2e= e\circ e$ for every idempotent $e$ and since $\frac{1}{2}\in R$ (and so
	$xe_{ij}  = 2(\frac{1}{2}xe_{ij})$)
	 it follows that
	 $A$ is additively spanned by Jordan products of idempotents. Thus, Theorem \ref{tidem}  applies.
	 \end{proof}
	 
\begin{remark}
The assumption  that $\frac{1}{2}\in R$ cannot be removed. Indeed, if $R$ is a ring with char$(R)=2$, then
any map $T:A\to A$ of the form $T(x)= x + \lambda(x)1$, where $\lambda:A\to Z$ is an additive map,  satisfies \eqref{zj}. It is easy to find examples where such a map is surjective but is  not a weighted Jordan homomorphism.
\end{remark}

\begin{remark} Every
Jordan homomorphism on a matrix ring $M_n(R)$ is the sum of a homomorphism and an antihomomorphism \cite{JR}. The result of Corollary \ref{c1} can thus be stated as that $T(x)=c\big(\Phi_1(x)+\Phi_2(x)\big)$ where $\Phi_1$ is a homomorphism and $\Phi_2$ is an antihomomorphism.
\end{remark}

The matrix ring $M_n(R)$ is our basic and motivating example of a ring satisfying the condition of Theorem \ref{tidem}
regarding  idempotents. However, there are other examples.

\begin{example}
	Let $A$ and $B$ be unital  rings with $\frac{1}{2}$ that are additively spanned by Jordan products of idempotents. It is an easy exercise to show that the triangular ring $\tri(A,M,B)$, where $M$ is a unital 
	 $(A,B)$-bimodule, is also  additively spanned by Jordan products of idempotents (compare \cite[Corollary 2.5]{zpdbook}).
	\end{example}

\section{Pairs of maps satisfying $S(x^2)=T(x)^2$}\label{s3}

Until further notice, we assume that $A$ is a {\em unital prime  ring}  with char$(A)\ne 2$ 
and $S,T:A\to A$ are additive maps satisfying 
\begin{equation} \label{ena}S(x^2)=T(x)^2\quad(x\in A).\end{equation}
The standard linearization trick shows that \eqref{ena} is equivalent to 
\begin{equation} \label{dva}S(x\circ y)=T(x)\circ T(y) \quad(x,y\in A).\end{equation}
We assume that  {\em $T$ is bijective}.  Our goal is to prove that, under some  additional restrictions on char$(A)$ which will be imposed later,  $T$ is a weighted Jordan homomorphism.

The center of $A$ will be denoted by $Z$.  Further, we denote $c=T(1)$ and $b=T^{-1}(1)$. 
Note that \eqref{dva} shows that
 \begin{equation*}\label{ec} 2T(x)= S(x\circ b) \quad(x\in A).\end{equation*}
and
 \begin{equation}\label{ea} 2S(x)=T(x)\circ c \quad(x\in A),\end{equation}
and that \eqref{dva} and \eqref{ea} yield
 \begin{equation}\label{eb} T(x\circ y)\circ c=2T(x)\circ T(y) \quad(x,y\in A).\end{equation}
 Thus, \eqref{ena} is just a small variation of the condition \eqref{eb} that was already studied in the preceding section. Under the presence of the element $\frac{1}{2}$,  the two conditions are equivalent.


We need some more notation. By $C$ we denote the {\em extended centroid} $C$ of $A$. Recall that $C$ is a field containing the center $Z$ (see \cite[Section 7.5]{INCA} for details). 
Let $x\in A$. We write $\deg(x)=n$ if $x$ is algebraic of degree $n$ over $C$, and $\deg(x)=\infty$ if $x$ is not algebraic over $C$.
Set $\deg(A)=\sup\{\deg(x)\,|\, x\in A\}$. It is well known that the condition that $\deg(A)  < \infty$ is equivalent to the condition that $A$ is a PI-ring.

Our first  lemma was essentially proved in \cite{C}. More precisely, noticing that
\eqref{dva} implies $$T(y)\circ T(xyx) = T(x)\circ T(yxy)$$ we see that this lemma is evident from the proof 
 of \cite[Theorem 2.4]{C} along with a basic result on functional identities which states that a prime ring  $A$ is a  $d$-free
 subset of $Q=Q_{ml}(A)$, the maximal left ring of quotients of $A$, 
   if and only if $\deg(A) \ge d$ \cite[Corollary 5.12]{FIbook}. Therefore, we state it without proof.
 
 \begin{lemma}\label{l1} If $\deg(A)\ge 4$, then $T$ is a weighted Jordan homomorphism.\end{lemma}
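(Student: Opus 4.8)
The plan is to reduce everything to the centrality of $c=T(1)$ and then to exploit the single functional identity that the hypothesis produces. First I would observe that \eqref{eb} is literally the identity \eqref{txy} of Lemma~\ref{l0} (with $B=A$); since a prime ring of characteristic $\ne 2$ is $2$-torsion free, Lemma~\ref{l0} applies and reduces the whole statement to proving that $c\in Z$ (equivalently, $c^2\in Z$). Thus it suffices to establish that $c$ is central.

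Next I would extract the functional identity noted just before the statement. Applying \eqref{dva} to the pairs $(y,xyx)$ and $(x,yxy)$ and using $y\circ(xyx)=xyxy+yxyx=x\circ(yxy)$, the two left-hand sides coincide, so
\[ T(y)\circ T(xyx)=T(x)\circ T(yxy)\qquad(x,y\in A). \]
This identity is homogeneous of degree $2$ in each of $x$ and $y$; polarizing $x=x_1+x_2$ and $y=y_1+y_2$ and retaining the component multilinear in $x_1,x_2,y_1,y_2$ turns it into a functional identity in four variables.

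The heart of the argument is then the theory of functional identities. Since $\deg(A)\ge 4$, the ring $A$ is a $4$-free subset of its maximal left ring of quotients $Q=Q_{ml}(A)$ by \cite[Corollary~5.12]{FIbook}, and $4$-freeness is exactly the amount of room needed to resolve a four-variable identity of this type. Carrying out this resolution --- which is the computation performed in the proof of \cite[Theorem~2.4]{C} --- pins down the relevant coefficient maps in standard form and, upon specializing the free variables (in particular to $1$, so as to recover $c=T(1)$), yields $[c,r]=0$ for every $r\in A$. Hence $c\in Z$, and Lemma~\ref{l0} completes the proof.

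I expect the genuine obstacle to be this last step: the polarized identity is not literally in the standard shape $\sum_i E_i x_i+\sum_j x_j F_j=0$, because each free variable still sits \emph{inside} the unknown map $T$, so one must first manipulate it into a bona fide functional identity (a point where the bijectivity of $T$ is helpful) before the $4$-free machinery can be invoked, and then read off precisely the centrality of $c$ rather than merely partial structural information. The hypothesis $\deg(A)\ge 4$ is the exact threshold that makes this possible with no exceptional cases, which is why the complementary range $\deg(A)<4$ (the PI situation) has to be treated by a separate argument.
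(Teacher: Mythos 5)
Your proposal takes essentially the same route as the paper: the paper likewise derives $T(y)\circ T(xyx)=T(x)\circ T(yxy)$ from \eqref{dva} and then states the lemma without proof, noting that it is evident from the proof of \cite[Theorem 2.4]{C} together with the fact that a prime ring is a $d$-free subset of $Q_{ml}(A)$ if and only if $\deg(A)\ge d$ \cite[Corollary 5.12]{FIbook}. Your extra packaging via Lemma~\ref{l0} (reducing everything to the centrality of $c$) is a harmless reorganization of that same argument.
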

 
It should be emphasized that Lemma \ref{l1} covers the case where 
$\deg(A)=\infty$. 
We thus only need to consider the case where $A$ is a PI-ring with $\deg(A) < 4$.
The   $\deg(A)=1$ case is trivial.

\begin{lemma}\label{ldeg1}
If $\deg(A)=1$, then $T$ is a weighted Jordan homomorphism.
\end{lemma}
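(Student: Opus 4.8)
The plan is to show that the hypothesis $\deg(A)=1$ forces $A$ to be commutative, after which the conclusion follows almost immediately from Lemma \ref{l0}.

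First I would unwind the meaning of $\deg(A)=1$. Every $x\in A$ is then algebraic of degree $1$ over the extended centroid $C$, so there exist $\alpha,\beta\in C$ with $\alpha\neq 0$ and $\alpha x+\beta=0$. Since $C$ is a field, this gives $x=-\alpha^{-1}\beta\in C$. Hence $A\subseteq C$, and as $C$ is a field (in particular commutative), $A$ itself is commutative. In particular its center $Z$ coincides with $A$.

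Next I would simply invoke Lemma \ref{l0}. Because $A$ is commutative, $c=T(1)$ lies in $Z$, so condition (iii) of that lemma holds. The remaining hypotheses are already at hand: $T$ is bijective and hence surjective, $A$ is $2$-torsion free (being prime with char$(A)\neq 2$), and the identity \eqref{eb} is exactly the identity \eqref{txy} required there. Therefore Lemma \ref{l0} applies and yields that $T$ is a weighted Jordan homomorphism, with the invertibility of $c$ supplied internally by that lemma via $c^{-1}=T(b^2)$.

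I do not expect a genuine obstacle here: once $\deg(A)=1$ is identified with commutativity, the statement reduces to a direct citation of Lemma \ref{l0}. The only point requiring a little care is the first step, namely confirming that ``algebraic of degree $1$ over $C$'' places each element of $A$ inside the field $C$ itself, and not merely inside some commutative subring; this is what makes the commutativity of $A$, and thus the centrality of $c$, rigorous.
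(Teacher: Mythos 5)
Your proof is correct and is essentially the paper's own argument: identify $\deg(A)=1$ with commutativity of $A$ (so that $c=T(1)\in Z$ trivially) and then cite Lemma \ref{l0} via the identity \eqref{eb}. The extra details you supply --- that degree $1$ over $C$ places $A$ inside the field $C$, and that primeness with char$(A)\neq 2$ gives $2$-torsion freeness --- are accurate verifications of hypotheses the paper leaves implicit.
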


\begin{proof}
The condition that  $\deg(A)=1$ means that $A$ is commutative, so $c\in Z$ automatically holds and we may apply Lemma \ref{l0}.
\end{proof}

 Hence, there are only two cases left: $\deg(A)=2$ and $\deg(A)=3$. 
The rings that remain to be considered are thus very specific.
   However,  for problems that can be solved by means of  functional identities, the low  degree situations are usually the more difficult ones. 
 
 In our next lemma  we will not yet  need the degree restriction. Its proof is also  based on functional identities. The reader is referred to \cite{FIbook} for the explanation of some notions that will be used.

 \begin{lemma}\label{l2} There exists a  ring monomorphism $\mu:Z\to Z$ such that
 $$T(zx)=\mu(z) T(x)\quad (z\in Z, x\in A),$$
  $$S(zx)=\mu(z) S(x)\quad (z\in Z, x\in A).$$
\end{lemma}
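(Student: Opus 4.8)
The plan is to extract from \eqref{dva} a functional identity encoding how $T$ interacts with central multiplication, and then to solve it by elementary prime-ring arguments, so that (as announced before the statement) no restriction on $\deg(A)$ is needed. Fix $z\in Z$. Since $z$ is central, $zx\circ y = z(x\circ y) = x\circ zy$, so applying \eqref{dva} to the pairs $(zx,y)$ and $(x,zy)$ and comparing the two expressions for $S\big(z(x\circ y)\big)$ gives
$$T(zx)\circ T(y) = T(x)\circ T(zy)\qquad(x,y\in A).$$
Because $T$ is bijective, as $x,y$ run over $A$ the elements $u=T(x)$ and $v=T(y)$ run over all of $A$; defining the additive map $G=G_z\colon A\to A$ by $G(T(x))=T(zx)$, the displayed identity becomes
$$G(u)\circ v = u\circ G(v)\qquad(u,v\in A).$$
It therefore suffices to prove that every additive $G\colon A\to A$ satisfying this identity has the form $G(u)=wu$ for some $w\in Z$.

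I would solve this identity directly. Setting $v=1$ yields $2G(u)=u\circ w$, where $w:=G(1)$. Doubling the identity and substituting $2G(u)=u\circ w$ and $2G(v)=v\circ w$, all mixed terms cancel upon expansion, leaving $w[u,v]=[u,v]w$, that is $[w,[u,v]]=0$ for all $u,v\in A$. The heart of the matter is to upgrade this to $w\in Z$. Replacing $v$ by $vu$ and using $[u,vu]=[u,v]u$ gives $[u,v][w,u]=0$; replacing $v$ by $vr$ then gives $[u,v]\,r\,[w,u]=0$ for all $r$, i.e.\ $[u,v]\,A\,[w,u]=\{0\}$. For fixed $u$, primeness forces, for each $v$, either $[u,v]=0$ or $[w,u]=0$; if $[w,u]\ne 0$ then $[u,v]=0$ for all $v$, whence $u\in Z$ and so $[w,u]=0$, a contradiction. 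Thus $[w,u]=0$ for every $u$, i.e.\ $w\in Z$. Since $A$ is $2$-torsion free (being prime with $\mathrm{char}(A)\ne 2$) and $w$ is central, $2G(u)=u\circ w=2wu$ gives $G(u)=wu$.

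Applying this to $G_z$ and setting $\mu(z):=G_z(1)=T(zb)$, I obtain $\mu(z)\in Z$ and $T(zx)=\mu(z)T(x)$ for all $x\in A$. The remaining properties of $\mu$ and the companion formula for $S$ are then formal. Additivity of $\mu$ is clear and $\mu(1)=T(b)=1$; comparing $T\big((zz')x\big)=\mu(zz')T(x)$ with $T\big(z(z'x)\big)=\mu(z)\mu(z')T(x)$ shows $\big(\mu(zz')-\mu(z)\mu(z')\big)T(x)=0$ for all $x$, whence $\mu(zz')=\mu(z)\mu(z')$ by surjectivity of $T$ and primeness, while $\mu(z)=0$ forces $zx=0$ for all $x$, hence $z=0$, giving injectivity. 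Finally, feeding $T(zx)=\mu(z)T(x)$ into \eqref{ea} and using centrality of $\mu(z)$ yields $2S(zx)=\mu(z)\big(T(x)\circ c\big)=2\mu(z)S(x)$, so $S(zx)=\mu(z)S(x)$. I expect the sole genuinely non-formal step to be the passage $[w,[u,v]]=0\Rightarrow w\in Z$; its advantage is that it rests only on primeness and therefore, unlike Lemma \ref{l1}, requires no lower bound on $\deg(A)$.
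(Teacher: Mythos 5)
Your proof is correct, and it takes a genuinely different route from the paper's. The paper first disposes of the commutative case via Lemma \ref{ldeg1}, then treats \eqref{tt} as a functional identity: noncommutativity makes $A$ a $2$-free subset of $Q$, so \cite[Theorem 4.3]{FIbook} produces the standard-solution expressions \eqref{a1}--\eqref{a4}, and comparing them forces $\lambda_1=\lambda_2=0$ and $p_1=-p_2\in C$, whence $T(zx)=T(x)p_1$ and $p_1=T(zb)\in Z$. You instead spend the bijectivity of $T$ at the outset to rewrite the same identity as $G(u)\circ v=u\circ G(v)$ for the additive map $G=G_z$ with $G_z(T(x))=T(zx)$, and then solve this identity by hand: $v=1$ gives $2G(u)=u\circ w$ with $w=G(1)$; substituting back turns the identity into $[w,[u,v]]=0$; the substitutions $v\mapsto vu$ and then $v\mapsto vr$ give $[u,v]A[w,u]=\{0\}$; and primeness plus the observation that $[w,u]\neq 0$ would force $u\in Z$ yields $w\in Z$. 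I checked each of these computations and the closing formalities (multiplicativity and injectivity of $\mu$, and the passage to $S$ via \eqref{ea}, which is the same as in the paper); all are sound, including the standard fact that a prime ring of characteristic $\neq 2$ is $2$-torsion free. What your approach buys is elementarity and uniformity: no FI machinery, no appeal to $2$-freeness, and no commutative/noncommutative case split, since your argument is vacuous when $A$ is commutative. What the paper's approach buys is the bookkeeping of injectivity: in the paper's argument the injectivity of $T$ is used only in the final sentence (to get $\mu$ injective), which is exactly what Remark \ref{rtr} records and Corollary \ref{c12} later exploits, whereas you consume injectivity immediately in order to define $G_z$. This difference is repairable within your framework: if $T$ is merely surjective, $G_z$ is still well defined, because $T(x_1)=T(x_2)$ implies, upon subtracting the corresponding instances of \eqref{tt}, that $\big(T(zx_1)-T(zx_2)\big)\circ T(y)=0$ for all $y$, and choosing $y$ with $T(y)=1$ together with $2$-torsion freeness gives $T(zx_1)=T(zx_2)$. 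With that one-line patch your proof supports the later remark and corollary as well.
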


\begin{proof} Fix $z\in Z$.
Since $zx\circ y = x \circ zy$ for all $x,y\in A$ it follows from \eqref{dva} that
$T(zx)\circ T(y) = T(x)\circ T(zy)$, that is,
\begin{equation}\label{tt} T(zx)T(y) - T(zy)T(x)+ T(y)T(zx) - T(x)T(zy) =0 \quad(x,y\in A).\end{equation}  
In view of Lemma \ref{ldeg1}, the lemma is trivial if $A$ is commutative. We may thus assume that $A$ is not commutative, which implies that it is a $2$-free subset of  $Q$  \cite[Corollary 5.12]{FIbook}. Hence, applying \cite[Theorem 4.3]{FIbook} to 
\eqref{tt} we see that there exist uniquely determined $p_1,p_2\in Q$ and maps $\lambda_1,\lambda_2:A\to C$ such that
\begin{align}
T(zx) &= T(x)p_1 + \lambda_1(x)\quad(x\in A),\label{a1}\\
-T(zy) &= T(y)p_2 + \lambda_2(y)\quad(y\in A),\label{a2}\\
T(zx) &= -p_2T(x) - \lambda_1(x)\quad(x\in A),\label{a3}\\
-T(zy) &= - p_1T(y) - \lambda_2(y)\quad(y\in A)\label{a4}.
\end{align}
Comparing \eqref{a1} and \eqref{a4} we obtain
 $$T(x)p_1 - p_1T(x) =  \lambda_2(x) - \lambda_1(x) \in C\quad(x\in A).$$
Again using the  $2$-freeness of $A$ along with \cite[Theorem 4.3]{FIbook} it follows that $\lambda_1 = \lambda_2$ and 
$p_1\in C$ (similarly \eqref{a2} and \eqref{a3} show that $p_2\in C$). Next, comparing \eqref{a1} and \eqref{a2} we obtain
 $$T(x)(p_1 + p_2) = -\big(\lambda_1(x) + \lambda_2(x)\big)\in C\quad(x\in A),$$
 and so, again by the  $2$-freeness and \cite[Theorem 4.3]{FIbook}, $\lambda_1 + \lambda_2 =0$ (and $p_1 = -p_2$). 
 Since we have shown above that  $\lambda_1 = \lambda_2$ and since char$(A)\ne 2$ by assumption, it follows that $\lambda_1=0$. 
 Thus, $T(zx)= T(x)p_1$ for all $x\in A$.  
As above, let $b\in A$  be such that  $T(b)=1$. From $p_1=  T(b)p_1 = T(zb)$ we see that $p_1\in C\cap A=Z$.
 Defining
 $\mu(z)=p_1\in Z$ we thus have  $T(zx)=\mu(z) T(x)$ for all $x\in A$. From \eqref{ea} we see that this immediately implies that $S(zx)=\mu(z) S(x)$ holds too.
 
For any $z_1,z_2\in Z$, we have
$$\mu(z_1+z_2)= T\big((z_1+z_2)b\big) = T(z_1b)+ T(z_2b) = \mu(z_1) + \mu(z_2)$$
and
 $$\mu(z_1z_2)= T(z_1z_2b) = \mu(z_1)T(z_2b)= \mu(z_1)\mu(z_2),$$
 so $\mu$ is a ring endomorphism. Since $T$ is injective we see from 
 $\mu(z)=T(zb)$ that $\mu$ is injective too.
\end{proof}

From now on we assume that $\deg(A)$ equals $2$ or $3$. In particular, 
 $\deg(A) 
<\infty$, which implies that $Q_Z(A)$, the ring of central quotients of $A$, is a finite-dimensional central simple algebra over the field of quotients of $Z$. This is the content of Posner's Theorem, see \cite[Theorem 7.58]{INCA}. The elements of the ring $Q_Z(A)$ can be written as $z^{-1}x$ where 
$z\in Z\setminus{\{0\}}$ and $x\in A$.

\begin{lemma}\label{lred}There exist  additive maps $\mathcal{S},\mathcal{T}\colon Q_Z(A)\to Q_Z(A)$ such that   $\left.\mathcal{S}\right|_{A} = S$, $\left.\mathcal{T}\right|_{A} = T$, and
 $\mathcal{S}(q^2)  = \mathcal{T}(q)^2$ for every $q\in Q_Z(A)$. 
\end{lemma}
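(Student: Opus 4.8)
The plan is to extend $S$ and $T$ to $Q_Z(A)$ in the only way compatible with the multiplicativity property recorded in Lemma \ref{l2}. A general element of $Q_Z(A)$ has the form $z^{-1}x$ with $z\in Z\setminus\{0\}$ and $x\in A$. Since $\mu:Z\to Z$ is an injective ring homomorphism with $\mu(1)=T(b)=1$, the element $\mu(z)$ is a nonzero central element of $A$ for every $z\neq 0$, hence invertible in $Q_Z(A)$, and its inverse lies in the field of quotients of $Z$, which is the center of $Q_Z(A)$. I would therefore define
\[
\mathcal{T}(z^{-1}x)=\mu(z)^{-1}T(x),\qquad \mathcal{S}(z^{-1}x)=\mu(z)^{-1}S(x).
\]

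First I would check that these maps are well defined. If $z^{-1}x=w^{-1}y$ in $Q_Z(A)$, then $wx=zy$ in $A$, and applying $T$ together with Lemma \ref{l2} gives $\mu(w)T(x)=T(wx)=T(zy)=\mu(z)T(y)$; multiplying by the central element $\mu(z)^{-1}\mu(w)^{-1}$ yields $\mu(z)^{-1}T(x)=\mu(w)^{-1}T(y)$, as required (the same argument works for $\mathcal{S}$ using the second formula of Lemma \ref{l2}). Additivity then follows by passing to a common denominator: for $q_i=z_i^{-1}x_i$ one has $q_1+q_2=(z_1z_2)^{-1}(z_2x_1+z_1x_2)$, and expanding $\mathcal{T}(q_1+q_2)$ with the help of $T(z_2x_1)=\mu(z_2)T(x_1)$, $T(z_1x_2)=\mu(z_1)T(x_2)$ and the multiplicativity of $\mu$ collapses to $\mathcal{T}(q_1)+\mathcal{T}(q_2)$. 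Taking $z=1$ shows $\left.\mathcal{T}\right|_A=T$ and $\left.\mathcal{S}\right|_A=S$, since $\mu(1)=1$.

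The final identity is then immediate. For $q=z^{-1}x$ we have $q^2=(z^2)^{-1}x^2$, so, using $\mu(z^2)=\mu(z)^2$ and the centrality of $\mu(z)^{-1}$ in $Q_Z(A)$,
\[
\mathcal{S}(q^2)=\mu(z)^{-2}S(x^2)=\mu(z)^{-2}T(x)^2=\bigl(\mu(z)^{-1}T(x)\bigr)^2=\mathcal{T}(q)^2,
\]
where the middle equality is exactly \eqref{ena}.

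I do not expect a serious obstacle here: the whole lemma rests on Lemma \ref{l2}, which guarantees both that the defining formulas are independent of the chosen representative $z^{-1}x$ and that additivity survives the passage to denominators. The only points requiring genuine care are that $\mu(z)^{-1}$ indeed lies in the center of $Q_Z(A)$ (so that it commutes with everything and squares correctly in the last display) and that $\mu$ is a unital ring homomorphism; both are supplied by Lemma \ref{l2} and Posner's theorem, which identifies the center of $Q_Z(A)$ with the field of quotients of $Z$.
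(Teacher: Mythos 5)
Your proposal is correct and follows essentially the same route as the paper: the same definitions $\mathcal{T}(z^{-1}x)=\mu(z)^{-1}T(x)$, $\mathcal{S}(z^{-1}x)=\mu(z)^{-1}S(x)$, the same cross-multiplication check of well-definedness via Lemma \ref{l2}, the same common-denominator argument for additivity, and the same final computation using $\mu(z^2)=\mu(z)^2$. Your added remarks that $\mu(1)=T(b)=1$ and that $\mu(z)^{-1}$ is central in $Q_Z(A)$ are correct and only make explicit what the paper leaves implicit.
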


\begin{proof}For any $z\in Z\setminus{\{0\}}$ and $x\in A$, define $$\mathcal{S}(z^{-1}x) = \mu(z)^{-1}S(x).$$ Assume that $z,z'\in Z\setminus{\{0\}}$ and $x,x'\in A$ are such that  $z^{-1}x = z'^{-1}x'$. Then $z'x = zx'$ and hence
$\mu(z')S(x)=\mu(z)S(x')$, that is, $ \mu(z)^{-1}S(x) =  \mu(z')^{-1}S(x')$. This shows that $\mathcal S$ is well-defined. It is clear that  $\left.\mathcal{S}\right|_{A} = S$. Let  $z,w\in Z\setminus{\{0\}}$ and $x,y\in A$. Then 
\begin{align*}&\mathcal{S}(z^{-1}x + w^{-1}y)=\mathcal{S}\big((zw)^{-1}(wx+ zy)\big)=\mu(zw)^{-1} S(wx + zy)\\
=& \mu(z)^{-1} \mu(w)^{-1} \big(\mu(w)S(x) + \mu(z)S(y)\big) = \mathcal{S}(z^{-1}x) + \mathcal{S}(w^{-1}y), \end{align*}
so $\mathcal S$ is additive.

Similarly we see that 
$$\mathcal{T}(z^{-1}x) = \mu(z)^{-1}T(x)$$
is a well-defined additive map which extends $T$. Finally, 
$$\mathcal{S}\big((z^{-1}x)^2\big)=\mathcal{S}(z^{-2}x^2)=\mu(z^2)^{-1}S(x^2)= \mu(z)^{-2}T(x)^2= \mathcal{T}(z^{-1}x)^2,$$ 
which proves that  $\mathcal{S}(q^2)  = \mathcal{T}(q)^2$ for every $q\in Q_Z(A)$. 
\end{proof}

Lemma \ref{lred} shows that there is no loss of generality in assuming that  $A=Q_Z(A)$ is a central simple algebra 
such that $\deg(A)=2$ or $\deg(A) =3$, or equivalently, $\dim_Z(A)=4$ or $\dim_Z(A)=9$ (see \cite[Theorem C.2]{FIbook}). Furthermore,
 in light of Corollary  \ref{c1} we may assume that $A$ is not a ring of $n\times n$ matrices, $n\ge 2$, over some ring, and hence, by the classical Wedderburn's structure theorem, we may assume that {\em  $A$ is a division ring}.

\begin{lemma}\label{ldeg2}
If $\deg(A)=2$, then $T$ is a  weighted Jordan homomorphism.
\end{lemma}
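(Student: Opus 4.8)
The plan is to exploit the very concrete structure we have arrived at: by Lemma~\ref{lred} and the subsequent reduction, $A$ is a central simple division algebra with $\deg(A)=2$, i.e.\ $\dim_Z(A)=4$, a (generalized) quaternion algebra over the field $F=\mathrm{Frac}(Z)$, which after extending scalars to the extended centroid $C$ becomes $M_2(C)$. The additive maps $S,T$ extend $C$-semilinearly along the automorphism $\mu$ of Lemma~\ref{l2}, so it suffices to analyze the relation $\mathcal S(q^2)=\mathcal T(q)^2$ on $A\otimes_Z C\cong M_2(C)$. The whole point is that on $M_2(C)$ the square map is extremely rigid, and the structure of maps $T$ with $T(q)^2$ expressible through an additive $\mathcal S(q^2)$ is severely constrained.

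First I would pass to the quadratic form picture. A quaternion division algebra $A$ over $F$ carries the reduced trace $\mathrm{trd}$ and reduced norm $\mathrm{nrd}$, and every element satisfies the Cayley--Hamilton identity $q^2-\mathrm{trd}(q)\,q+\mathrm{nrd}(q)=0$. Writing $A=Z\oplus A_0$ where $A_0=\{q:\mathrm{trd}(q)=0\}$ is the space of pure quaternions, one has for $q=\alpha+v$ with $v\in A_0$ the formula $q^2=(\alpha^2+\mathrm{nrd}(v)\cdot\text{sign})+2\alpha v$, so squaring is governed by a quadratic form on $A_0$ together with the scalar part. The relation \eqref{ena}, namely $S(q^2)=T(q)^2$, then becomes a system relating the symmetric bilinear form coming from $S$ to the quadratic data of $T(q)^2$. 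I would decompose $T$ according to $A=Z\oplus A_0$ and use the linearized form \eqref{dva}, $S(x\circ y)=T(x)\circ T(y)$, to read off how $T$ interacts with the Jordan product; since $x\circ y=2\,\mathrm{Re}$ of a product and the trace form controls $x\circ y$ on pure quaternions, the identity \eqref{dva} forces $T$ to respect the trace-zero decomposition up to the central scaling by $c=T(1)$.

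Concretely, from \eqref{ea} we have $2S(x)=T(x)\circ c$, so $S$ is determined by $T$ and $c$; substituting into \eqref{dva} recovers exactly \eqref{eb}, the weighted-Jordan relation $T(x\circ y)\circ c=2T(x)\circ T(y)$. By Lemma~\ref{l0} (whose torsion hypothesis holds since $\mathrm{char}(A)\ne2$), it therefore suffices to prove that $c\in Z$, equivalently $c^2\in Z$. So the entire lemma reduces to showing that the weight $c=T(1)$ is central. To this end I would evaluate \eqref{eb} on idempotent-like or trace-zero elements of $A\otimes_Z C\cong M_2(C)$: choosing $x,y$ among a standard basis of trace-zero matrices (the images of $1,\mathbf i,\mathbf j,\mathbf k$), the identity \eqref{eb} yields a finite set of quadratic relations on the four components of $T$, and matching the coefficient of each basis element forces $[c,T(x)]=0$ for enough $x$ that, by bijectivity of $T$, $c$ commutes with all of $A$. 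Bijectivity is crucial here: it guarantees the images $T(x)$ span $A\otimes_Z C$, so a commutation relation holding for all $x$ collapses to centrality of $c$.

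The main obstacle I anticipate is the low degree itself: with $\dim_Z A=4$ there is not enough room for the functional-identity machinery (which needed $2$-freeness, hence $\deg\ge2$ only marginally, and really wants higher degree) to finish the job cleanly, so the argument cannot simply invoke Lemma~\ref{l1}. Instead one must handle the quaternion case by hand, and the delicate point is controlling the scalar (central) part of $T$ and ruling out pathological additive maps that square correctly on pure quaternions but twist the center by $\mu$. Here I expect to use that $\mu$ is a field monomorphism of $Z$ (Lemma~\ref{l2}) together with the quadratic-form rigidity: a $\mu$-semilinear additive bijection of a quaternion algebra whose square map is controlled by an additive $\mathcal S$ must, after the central normalization by $c^{-1}$, preserve the reduced-norm form up to scalar, and an isometry (or anti-isometry) of the norm form on a quaternion algebra is exactly an algebra homomorphism or anti-homomorphism. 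Translating that structural statement back through the scaling by $c$ gives $c\in Z$ and completes the proof via Lemma~\ref{l0}. The genuinely technical step will be the coefficient bookkeeping in the $M_2(C)$ computation establishing $[c,T(x)]=0$; everything else is assembly of the tools already developed above.
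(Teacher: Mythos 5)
Your overall strategy has the right skeleton: reduce, via Lemma \ref{l0}, to showing that $c=T(1)$ is central, after observing that \eqref{ena}/\eqref{dva} together with \eqref{ea} give \eqref{eb}. This is exactly the reduction the paper makes. But the heart of the lemma---actually proving $c\in Z$---is not proved in your proposal; it is deferred to ``coefficient bookkeeping'' in $M_2(C)$ that you assert will force $[c,T(x)]=0$ for all $x$. The paper's proof shows this step is genuinely delicate and does not follow from naive coefficient matching. Working with the degree-$2$ identity $x^2=\tau(x)x+\delta(x,x)$ (with $\tau,\delta$ valued in $Z$), one only obtains a \emph{pointwise} dichotomy: for each $x$, either $\tau(T(x))=\frac{1}{2}\mu(\tau(x))\tau(c)$ or $[T(x),c]=0$. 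Globalizing one of the two branches requires the observation that a group is not the union of two proper subgroups, and the trace-condition branch must then be killed by a further case analysis ($\tau(c)=0$ versus $\tau(c)\neq 0$) ending in the identity $\bigl(x-\frac{1}{2}\tau(x)\bigr)^2=0$, which contradicts the standing reduction that $A$ is a \emph{division ring}. None of this structure is visible in your sketch, and nothing you wrote rules out the possibility that $[T(x),c]\neq 0$ for some individual $x$ while the relations you extract are still satisfied.

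Two further points would need repair even as a framework. First, $A\otimes_Z C\cong M_2(C)$ is false: after the reduction, $A=Q_Z(A)$ is a quaternion \emph{division} algebra whose extended centroid $C$ coincides with its center, so tensoring with $C$ changes nothing; splitting requires a genuine splitting field (e.g.\ the algebraic closure $K$), and extending $T$ there demands extending the monomorphism $\mu$ to $K$ and checking that the twisted tensor map is well defined---doable, but an extra argument, and note the paper deliberately avoids any scalar extension in the degree-$2$ case (it extends scalars only in degree $3$, and only after converting the problem into a \emph{multilinear} polynomial identity, which transfers automatically, unlike a semilinear extension of $T$ itself). Second, your fallback argument is circular: normalizing $T$ by $c^{-1}$ and invoking rigidity of norm-form isometries presupposes that $c$ is invertible and central, and the identification of norm isometries with (anti)homomorphisms additionally requires the map to fix $1$ (equivalently, preserve reduced traces)---all of which is precisely what is to be proved. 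So the proposal identifies the correct target but contains no working mechanism for reaching it.
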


\begin{proof}
Since $\deg( A) = 2$, there exist an additive map $\tau:A\to Z$ and a biadditive map $\delta:A^2\to Z$ such that
\begin{equation}\label{t2}
	x^2 = \tau(x) x+ \delta(x,x) 
\end{equation}
for every $x\in A$ 
(see \cite[Corollary C.3]{FIbook}). We may assume that $\delta$ is symmetric, since otherwise we replace it by $(x,y)\mapsto \frac{1}{2}\big(\delta(x,y) + \delta(y,x)\big)$. Linearizing \eqref{t2} we obtain that for all $x,y\in A$,
\begin{equation*}\label{tt2}
	x\circ y = \tau(x) y+\tau(y)x+ 2\delta(x,y).
\end{equation*}
From \eqref{ea} we thus see that
\begin{equation}\label{st}2S(x)= \tau(T(x))c + \tau(c)T(x) + 2\delta(T(x),c)\end{equation}
for all $x\in A$.
Next, applying $S$ to \eqref{t2} and using \eqref{ena}  we obtain
$$T(x)^2 = \mu(\tau(x))S(x)  + \mu\big(\delta(x,x)\big)c^2.$$
Applying \eqref{t2} and \eqref{st} we see that this can be rewritten as
\begin{align}&\tau(T(x))T(x) + \delta(T(x),T(x))\nonumber\\ \label{emil}=&\frac{1}{2}  \mu(\tau(x))\tau(T(x))c + \frac{1}{2}  \mu(\tau(x)) \tau(c)T(x) \\
&+  \mu(\tau(x))\delta(T(x),c) + \mu\big(\delta(x,x)\big)c^2.\nonumber\end{align}
Commuting this identity with $c$ we obtain
$$\Big(\tau(T(x))-\frac{1}{2}  \mu(\tau(x))\tau(c) \Big)[T(x),c]=0$$
for all $x\in A$. Accordingly, for each $x\in A$ we have either
\begin{equation}\label{eit} \tau(T(x))=\frac{1}{2}  \mu(\tau(x))\tau(c)\end{equation}
or $[T(x),c]=0$.  The set of all $x\in A$ that satisfy one of these two conditions is an additive subgroup of $A$. 
Since a group cannot be the union of two proper subgroups, one of the two conditions must hold for every $x\in A$. If $[T(x),c]=0$ for every $x\in A$, then $c\in Z$ and so $T$ is a weighted Jordan homomorphism by Lemma \ref{l0}. We may thus assume that \eqref{eit} holds for every $x\in A$.

Note that \eqref{emil} along with $c^2 =\tau(c)c+\delta(c,c)$ now shows that 
$$\Big(\frac{1}{2}  \mu(\tau(x))\tau(T(x)) +  \mu\big(\delta(x,x)\big)\tau(c)\Big)c \in Z.$$
Therefore, either $c\in Z$ or
$$\frac{1}{2}  \mu(\tau(x))\tau(T(x)) +  \mu\big(\delta(x,x)\big)\tau(c) =0$$
for every $x\in A$. Assume that the latter holds. By \eqref{eit}, we can rewrite this identity as
\begin{equation}\label{bih}\Big(\frac{1}{4}  \mu(\tau(x)^2)  +  \mu\big(\delta(x,x)\big)\Big)\tau(c)=0.\end{equation}
If $\tau(c)=0$ then it follows from \eqref{eit} that $\tau(T(x))=0$   and hence $T(x)^2\in Z$ for every $x\in A$. Since $T$ is surjective, this means that $y^2\in Z$ for every $y\in A$, which leads to a contradiction that
$y= \frac{1}{2}\big((y+1)^2 - y^2 - 1\big)\in Z$ for every $y\in A$. Thus, $\tau(c)\ne 0$ and so \eqref{bih} implies that
$$\frac{1}{4}  \mu(\tau(x)^2)  +  \mu\big(\delta(x,x)\big)=0$$
for every $x\in A$. Since $\mu$ is injective it follows that $$\frac{1}{4} \tau(x)^2  =-\delta(x,x).$$ Together with \eqref{t2} this yields
$$\big(x-\frac{1}{2}\tau(x)\big)^2 = x^2 - \tau(x)x + \frac{1}{4} \tau(x)^2=x^2 - \tau(x)x - \delta(x,x)=0.$$
Since, as mentioned before the statement of the lemma, we may assume that $A$ is a division ring, this implies that $x=\frac{1}{2}\tau(x)\in Z$ for every $x\in A$, a contradiction. Therefore, $c\in Z$ and so $T$ is a weighted Jordan homomorphism
by Lemma \ref{l0}.
\end{proof}

The case where $\deg(A)=3$ is more involved. To handle it, we need
the following 
 linear algebra lemma.
 
\begin{lemma}\label{m1}
	Let $K$ be an algebraically closed field with {\rm char}$(K)\ne 2,3$.  Let $t \in M_3(K)$. Then the set  $$S_y=\{t, t \circ y, (t \circ y) \circ y, ((t \circ y) \circ y) \circ y \}$$ is linearly dependent for every $y \in M_3(K)$ if and only if  $t$ is a scalar matrix.
\end{lemma}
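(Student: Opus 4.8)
The plan is to prove the two implications separately, disposing of the easy ``scalar $\Rightarrow$ dependent'' direction first and then arguing the harder direction by contraposition. Throughout I write $L_y(x)=x\circ y=xy+yx$, so that $S_y=\{t,L_yt,L_y^2t,L_y^3t\}$, and I record two facts that drive everything. First, the dependence question is invariant under simultaneous conjugation $t\mapsto ptp^{-1}$, $y\mapsto pyp^{-1}$, because $L_{pyp^{-1}}^k(ptp^{-1})=p\,L_y^k(t)\,p^{-1}$, so conjugation sends $S_y$ for $t$ to $p\,S_{pyp^{-1}}\,p^{-1}$ and preserves linear (in)dependence. Second, for a \emph{diagonal} $y=\diag(\lambda_1,\lambda_2,\lambda_3)$ one has $L_y(e_{ij})=(\lambda_i+\lambda_j)e_{ij}$, i.e.\ $L_y$ is diagonal in the matrix-unit basis. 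For the easy direction, if $t=\alpha I$ then a one-line induction gives $L_y^k(I)=2^ky^k$, so every element of $S_y$ is a scalar multiple of one of $I,y,y^2,y^3$; by Cayley--Hamilton $y^3$ lies in the span of $I,y,y^2$, so these four matrices span a space of dimension at most $3$ and $S_y$ is dependent for every $y$.

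The core of the converse is the following computation for diagonal $y=\diag(\lambda_1,\lambda_2,\lambda_3)$. Writing $t=\sum_{i,j}t_{ij}e_{ij}$ and using $L_y(e_{ij})=(\lambda_i+\lambda_j)e_{ij}$, one finds for any scalars $c_0,\dots,c_3$ that $\sum_k c_k L_y^k(t)=\sum_{i,j}t_{ij}\,P(\lambda_i+\lambda_j)\,e_{ij}$, where $P(X)=\sum_k c_kX^k$ has degree at most $3$. Thus a nontrivial dependence relation for $S_y$ is precisely a nonzero $P$ of degree $\le 3$ vanishing at every ``active'' sum $\lambda_i+\lambda_j$ with $t_{ij}\neq 0$. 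Since a nonzero polynomial of degree $\le 3$ has at most three roots, $S_y$ is \emph{independent} as soon as the active sums take at least four distinct values.

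It therefore suffices, assuming $t$ nonscalar, to conjugate $t$ into a convenient form and then choose the $\lambda_i$ generically. I claim every nonscalar $t$ is conjugate to a matrix all of whose nine entries are nonzero. The point is that each entry of $p^{-1}tp$ is a non-identically-zero function of $p$ on $GL_3(K)$: if an off-diagonal entry vanished for all $p$, then (reading the $(1,2)$ entry as $w_1(tv_2)$ with $w_1(v_2)=0$) we would get $tv\in\mathrm{span}(v)$ for every $v$, forcing $t$ scalar; if a diagonal entry vanished for all $p$, a similar reading forces $t=0$. Both are excluded. Consequently the numerators $(\mathrm{adj}(p)\,t\,p)_{ij}$ are nine nonzero polynomials in the entries of $p$, and their product with $\det p$ is a nonzero element of the polynomial ring $K[p_{ij}]$ (an integral domain); as $K$ is algebraically closed, hence infinite, this product is nonzero at some $p\in GL_3(K)$, giving $t_0:=p^{-1}tp$ with all entries nonzero. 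For such $t_0$ all six sums $\lambda_i+\lambda_j$ ($i\le j$) are active, and using that $K$ is infinite and $\mathrm{char}(K)\neq 2$ (so that the diagonal sums $2\lambda_i$ are separated from each other and from the off-diagonal sums) I can choose $\lambda_1,\lambda_2,\lambda_3$ avoiding the finitely many linear relations among these sums, making all six distinct. By the criterion above, $S_{y_0}$ is independent for $y_0=\diag(\lambda_1,\lambda_2,\lambda_3)$, and conjugating back via the first fact yields the desired $y=py_0p^{-1}$ for $t$ itself.

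The main obstacle is the claim of the third paragraph, that a nonscalar $3\times3$ matrix is conjugate to one with no zero entries; once this is in hand, the rest is the short matrix-unit computation together with a routine genericity argument. I expect the cleanest route to the claim to be the non-identical-vanishing argument sketched above, reducing ``a fixed entry vanishes in every basis'' to ``every vector is an eigenvector'', rather than a case-by-case analysis over the Jordan types of $t$, which would also work but is considerably longer.
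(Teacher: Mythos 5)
Your proof is correct, and it takes a genuinely different route from the paper's. For the hard direction the paper reduces $t$ to Jordan normal form (this is where algebraic closedness is used) and then treats four cases, in each one exhibiting an explicit $y$ and computing the three iterated Jordan products entry by entry to verify independence; the hypothesis $\mathrm{char}(K)\ne 2,3$ is needed there so that the explicit entries (such as $2$, $3$, $6$) do not degenerate. You instead eliminate the case analysis with two observations: (i) for diagonal $y=\mathrm{diag}(\lambda_1,\lambda_2,\lambda_3)$ the operator $x\mapsto x\circ y$ acts diagonally on the matrix units, $e_{ij}\mapsto(\lambda_i+\lambda_j)e_{ij}$, so a nontrivial dependence among $t,\,t\circ y,\,(t\circ y)\circ y,\,((t\circ y)\circ y)\circ y$ is exactly a nonzero polynomial of degree at most $3$ vanishing at every sum $\lambda_i+\lambda_j$ with $t_{ij}\ne 0$; and (ii) every nonscalar $t$ is conjugate to a matrix with all nine entries nonzero, which your dual-basis argument (reducing identical vanishing of an entry to ``every vector is an eigenvector'' or to $t=0$) combined with the product-of-nonzero-polynomials trick establishes cleanly. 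Choosing the $\lambda_i$ so that the six sums are pairwise distinct --- possible since $K$ is infinite and $\mathrm{char}(K)\ne 2$, as this only requires avoiding finitely many proper hyperplanes --- then rules out any such polynomial, and conjugation invariance finishes the argument. What your route buys: it is conceptually cleaner, and it actually proves a stronger statement, since it uses only that $K$ is infinite of characteristic different from $2$ (algebraic closedness enters solely to guarantee infiniteness, and $\mathrm{char}(K)\ne 3$ is never used). What the paper's route buys: it is entirely self-contained at the level of explicit finite verifications, whereas yours invokes the standard genericity facts that a nonzero polynomial over an infinite field has a point of nonvanishing and that $K^3$ is not a finite union of proper subspaces. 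One cosmetic slip: the conjugation-invariance sentence should say that $S_{pyp^{-1}}$ formed for $ptp^{-1}$ equals $p\,S_y\,p^{-1}$ where $S_y$ is formed for $t$; this is what your displayed identity $L_{pyp^{-1}}^k(ptp^{-1})=p\,L_y^k(t)\,p^{-1}$ gives, and it is what the final step needs.
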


\begin{proof} The ``if'' part  follows from the Cayley-Hamilton Theorem. To prove the ``only if'' part, assume that $t$ is not a scalar matrix. 
	Our goal is to find a $y \in A$ such that the set $S_y$ is linearly independent. Since $K$ is algebraically closed, we may assume that $t$ is in the Jordan normal form. 
	
	We divide the proof into four cases.
	
	\begin{itemize}
		\item[{\bf 1.}] 
		 Assume that
		\[
		t = 
		\begin{pmatrix}
			\lambda & 1 & 0 \\
			0 & \lambda & 1 \\
			0 & 0 & \lambda
		\end{pmatrix}
		\]
		where $\lambda \in K$. If
		\[
		y = 
		\begin{pmatrix}
			0 & 0 & 0 \\
			1 & 0 & 0 \\
			0 & 1 & 0
		\end{pmatrix},
		\]
		then $t \circ y$, $(t \circ y) \circ y $, $(t \circ y) \circ y) \circ y$ are
		\begin{align*}
		\begin{pmatrix}
			1 & 0 & 0 \\
			2\lambda & 2 & 0 \\
			0 & 2\lambda & 1
		\end{pmatrix}, 
		\begin{pmatrix}
			0 & 0 & 0 \\
			3 & 0 & 0 \\
			4\lambda & 3 & 0
		\end{pmatrix}, 
		\begin{pmatrix}
			0 & 0 & 0 \\
			0 & 0 & 0 \\
			6 & 0 & 0
		\end{pmatrix},
		\end{align*} respectively.
		It is easy to check that $S_y$ is linearly independent.
		
		\item[\bf  2.] Assume that
		\[
		t = 
		\begin{pmatrix}
			\lambda_1 & 1 & 0 \\
			0 & \lambda_1 & 0 \\
			0 & 0 & \lambda_2
		\end{pmatrix}
		\]
		where $\lambda_1, \lambda_2 \in K$. If 
		\[
		y = 
		\begin{pmatrix}
			0 & 0 & 0 \\
			1 & 0 & 0 \\
			0 & 1 & 0
		\end{pmatrix},
		\]
			then $t \circ y$, $(t \circ y) \circ y $, $(t \circ y) \circ y) \circ y$ are
		\begin{align*}
		\begin{pmatrix}
			1 & 0 & 0 \\
			2\lambda_1 & 1 & 0 \\
			0 & \lambda_1+\lambda_2 & 0
		\end{pmatrix},
		\begin{pmatrix}
			0 & 0 & 0 \\
			2 & 0 & 0 \\
			3\lambda_1 + \lambda_2 & 1 & 0
		\end{pmatrix}, 
		\begin{pmatrix}
			0 & 0 & 0 \\
			0 & 0 & 0 \\
			3 & 0 & 0
		\end{pmatrix},
		\end{align*}respectively.
		Again, it is easy to see that $S_y$ is linearly independent.
		
			\item[\bf  3.] Assume that
		\[
		t = 
		\begin{pmatrix}
			\lambda_1 & 0 & 0 \\
			0 & \lambda_2 & 0 \\
			0 & 0 & \lambda_3
		\end{pmatrix}
		\]
		where $\lambda_1,\lambda_2, \lambda_3 \in K$ are not all equal and $\lambda_1 + \lambda_2 + \lambda_3 \neq 0$. If
		\[
		y = 
		\begin{pmatrix}
			0 & 0 & 1 \\
			1 & 0 & 0 \\
			0 & 1 & 0
		\end{pmatrix},
		\]
			then $t \circ y$, $(t \circ y) \circ y $, $(t \circ y) \circ y) \circ y$ are
		\begin{align*}
			&\begin{pmatrix}
				0 & 0 & \lambda_1 + \lambda_3 \\
				\lambda_1 + \lambda_2 & 0 & 0 \\
				0 & \lambda_2 + \lambda_3 & 0
			\end{pmatrix}, \\
			&\begin{pmatrix}
				0 & \lambda_1 + \lambda_2 + 2\lambda_3 & 0 \\
				0 & 0 & 2\lambda_1 + \lambda_2 + \lambda_3 \\
				\lambda_1 + 2\lambda_2 + \lambda_3 & 0 & 0
			\end{pmatrix}, \\
			&\begin{pmatrix}
				2\lambda_1 + 3\lambda_2 + 3\lambda_3 & 0 & 0 \\
				0 & 3\lambda_1 + 2\lambda_2 + 3\lambda_3 & 0 \\
				0 & 0 & 3\lambda_1 + 3\lambda_2 + 2\lambda_3
			\end{pmatrix},
		\end{align*} respectively.
		A slightly more tedious but still elementary argument shows that $S_y$ is linearly independent in this case too.

		
			\item[\bf 4.] We now consider the last remaining case where
		\[
		t = 
		\begin{pmatrix}
			\lambda_1 & 0 & 0 \\
			0 & \lambda_2 & 0 \\
			0 & 0 & -\lambda_1 - \lambda_2
		\end{pmatrix}
		\]
		with $\lambda_1, \lambda_2 \in K$ and $\lambda_1 \neq 0$. If
		\[
		y = 
		\begin{pmatrix}
			1 & 0 & 0 \\
			1 & 0 & 0 \\
			0 & 1 & 0
		\end{pmatrix},
		\]
			then $t \circ y$, $(t \circ y) \circ y $, $(t \circ y) \circ y) \circ y$ are
		\begin{align*}
			& 
			\begin{pmatrix}
				2\lambda_1 & 0 & 0 \\
				\lambda_1 + \lambda_2 & 0 &0 \\
				0 & -\lambda_1 & 0
			\end{pmatrix}, \\
			& 
			\begin{pmatrix}
				4\lambda_1 & 0 & 0 \\
				3\lambda_1 + \lambda_2 & 0 & 0 \\
				\lambda_2 & 0 & 0
			\end{pmatrix}, \\
			& 
			\begin{pmatrix}
				8\lambda_1 & 0 & 0 \\
				7\lambda_1 + \lambda_2 & 0 & 0 \\
				3\lambda_1 + 2\lambda_2 & 0 & 0
			\end{pmatrix},
		\end{align*}
		respectively. One easily checks that $S_y$ is linearly independent.
			\end{itemize}	We have thus proved that for each $t$ that is not a scalar matrix  there is a matrix $y$ such that  $S_y$ is linearly independent.\end{proof}

 We are now ready to tackle the $\deg(A)=3$ case.
 
 \begin{lemma}\label{l9} If $\deg(A)=3$ and {\rm char}$(A)$ is also different from $3$ and $5$, then $T$ is a weighted Jordan homomorphism.
 \end{lemma}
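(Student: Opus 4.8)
The plan is to reduce the whole statement to the single assertion that $c$ is central, and then to squeeze this out of the linear-algebra Lemma \ref{m1}. First I would invoke Lemma \ref{l0}: since ${\rm char}(A)\neq 2$ the ring $B=A$ is $2$-torsion free, and the identity \eqref{eb} is exactly the hypothesis \eqref{txy} of that lemma. Hence it suffices to prove $c\in Z$, after which the implication (iii)$\implies$(i) of Lemma \ref{l0} gives that $T$ is a weighted Jordan homomorphism.

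Next I would pass to a splitting field. As recorded before the statement, we may assume $A$ is a central division algebra over $Z$ with $\dim_Z A=9$; note $Z$ is infinite, because a finite division ring is a field. Put $A_K=A\otimes_Z K\cong M_3(K)$, where $K$ is the algebraic closure of $Z$, so that ${\rm char}(K)={\rm char}(A)\neq 2,3,5$ and Lemma \ref{m1} is available over $K$. Writing $t\in M_3(K)$ for the image of $c$, centrality of $c$ is equivalent to $t$ being scalar, since centrality is detected by the injective map $a\mapsto a\otimes 1$ (if $t$ is scalar then $[c,a]\otimes 1=[t,a\otimes 1]=0$, whence $[c,a]=0$). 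Thus, by Lemma \ref{m1}, it is enough to prove that for every $y\in M_3(K)$ the set $\{t,\,t\circ y,\,(t\circ y)\circ y,\,((t\circ y)\circ y)\circ y\}$ is linearly dependent over $K$.

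The heart of the matter is to prove the corresponding dependence inside $A$: for every $y\in A$ the four elements $c,\ c\circ y,\ (c\circ y)\circ y,\ ((c\circ y)\circ y)\circ y$ are linearly dependent over $Z$. Because $A\otimes 1$ spans $M_3(K)$ over $K$ and $Z$ is infinite, $A\otimes 1$ is Zariski dense, while "these four elements are linearly dependent'' is a Zariski closed condition on $y$ (the vanishing of all $4\times 4$ minors of their coordinate matrix); so a dependence valid for all $y\in A$ propagates to all $y\in M_3(K)$, as required above. To obtain the dependence for a fixed $y\in A$, set $v=T^{-1}(y)$ and $m=T(v^2)$. Expanding $M_y^k c=\sum_{j}\binom{k}{j}y^j c\,y^{k-j}$ and using the reduced characteristic polynomial $v^3=\tau v^2-\sigma v+\nu$ with $\tau,\sigma,\nu\in Z$ (which exists since $\deg(A)=3$), one reduces $((c\circ y)\circ y)\circ y$ modulo $\linspan_Z\{c,\,c\circ y,\,(c\circ y)\circ y\}$ to a combination of the "middle'' terms $y\,c\,y$ and $y^2cy+ycy^2$. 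The functional equation is then brought in to control these middle terms: applying \eqref{dva} and \eqref{ea} together with the semilinearity relations $T(zx)=\mu(z)T(x)$ and $S(zx)=\mu(z)S(x)$ from Lemma \ref{l2} to the Jordan products $v\circ v^2=2v^3$ and $v^2\circ v^2=2v^4$ yields relations expressing $c\circ m$, $y\circ m$ and $m^2$ through $c,\,y,\,y^2,\,c^2$ and the scalars $\mu(\tau),\mu(\sigma),\mu(\nu)$. Eliminating $m$ from this system and using injectivity of $\mu$ should produce the desired $Z$-linear relation among the four elements, the integer coefficients coming from the binomial expansions and from $v^3,v^4$ being invertible precisely because ${\rm char}(A)\neq 3,5$.

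Putting the pieces together, the dependence then holds for all $y\in M_3(K)$, Lemma \ref{m1} forces $t$ to be a scalar matrix, hence $c\in Z$, and Lemma \ref{l0} finishes the proof. The hard part will be the third step: showing that the functional equation makes the middle terms $ycy$ and $y^2cy+ycy^2$ collapse into $\linspan_Z\{c,c\circ y,(c\circ y)\circ y\}$. Without such a collapse a non-scalar $c$ would survive, consistently with Lemma \ref{m1}, so the whole argument hinges on pinning down exactly how \eqref{dva} supplies this reduction and on checking that the emerging integer factors are units, which is where ${\rm char}(A)\neq 5$ must be used.
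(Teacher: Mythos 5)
Your overall architecture is the same as the paper's: reduce, via Lemma \ref{l0}, to showing that $T(1)$ (or its square) is central; prove that the four iterated Jordan products of this element with an arbitrary $y$ are linearly dependent; transfer the dependence to $\overline{A}\cong M_3(K)$; and invoke Lemma \ref{m1}. Your transfer step is a legitimate variant: where the paper encodes dependence by the vanishing of the Capelli polynomial $c_4$, multilinearizes, extends scalars, and specializes back (this is exactly where char$\,\ne 3,5$ enters, to cancel the factor $6!$), you use that $Z$ is infinite (correctly justified via Wedderburn's little theorem) so that $A$ is Zariski dense in $M_3(K)$ and dependence is a closed condition; that works.

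The genuine gap is the step you yourself call ``the heart of the matter'': producing, for every $y\in A$, a \emph{nontrivial} $Z$-linear relation among the four iterated Jordan products. Your plan --- set $v=T^{-1}(y)$, $m=T(v^2)$, apply \eqref{dva}, \eqref{ea} and Lemma \ref{l2} to $v\circ v^2$ and $v^2\circ v^2$, then ``eliminate $m$'' --- does not go through as stated. Those substitutions give only the relations $m\circ c=2y^2$, $m\circ y=2\mu(\tau(v))y^2+\mu(\alpha(v,v))(y\circ c)+2\mu(\delta(v,v,v))c^2$, and an expression for $m^2$; the unknown $m$ occurs only inside Jordan products and squares, and the operators $x\mapsto x\circ c$, $x\mapsto x\circ y$ are not invertible, so there is no algebraic elimination --- and, even granting one, nothing in this ``diagonal'' system guarantees that the resulting relation has a nonzero coefficient, without which linear dependence does not follow. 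The paper's proof overcomes both obstacles by a device missing from your sketch: it polarizes the cubic identity \eqref{ss} in three variables (obtaining \eqref{eq.fxyTz}) and substitutes $b=T^{-1}(1)$, which expresses $f(x,b)$, and then $f(x,x)$, through $c^2\circ T(x)$ and $c^2$; the leading coefficient of the final relation is $\gamma/4$ with $\gamma=\mu(\delta(b,b,b))\ne 0$, nonzero precisely because $A$ is a division ring so the reduced norm of $b$ does not vanish. Note also that this computation naturally yields dependence of $\{c^2,\,c^2\circ y,\,(c^2\circ y)\circ y,\,((c^2\circ y)\circ y)\circ y\}$ rather than of the corresponding set for $c$; this is harmless (one concludes $c^2\in Z$ and uses the implication (ii)$\implies$(i) of Lemma \ref{l0}), but your plan should target $c^2$, since no identity in sight controls $c$ itself. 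Finally, your guess that char$\,\ne 5$ is needed to invert integer factors in your elimination is unsupported; in the paper it is used only in the Capelli specialization, which your Zariski argument would replace.
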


\begin{proof} We start similarly as in the proof of Lemma \ref{ldeg2}.
As  $\deg( A) = 3$, there exist  symmetric multiadditive maps $\tau \colon A \to Z$,  $\alpha \colon A^2 \to Z$ and $\delta \colon A^3 \to Z$ such that
\[
	x^3 = \tau(x) x^2 + \alpha(x,x)x + \delta(x,x,x) 
\]
for every $x\in A$,
with $\delta(x,x,x)$ being the reduced norm of $x$. Since, as pointed out above, Corollary \ref{c1} enables us to assume that $A$ is a division ring, $\delta(x,x,x)\ne 0$ for every nonzero $x\in A$. In particular,
$$\gamma = \mu\big(\delta(b,b,b)\big) \ne 0$$
(here, as always, $b=T^{-1}(1)$).

Let $x \in A$. From \eqref{eb} we see that
$T(x^2) \circ T(x) =  T(x^3) \circ c.$
Hence,
\[
\begin{split}
	T(x^2) \circ T(x) 
	 = &T\big(\tau(x)x^2 + \alpha(x,x)x + \delta(x,x,x)\big) \circ c \\
	=& \mu(\tau(x)) T(x^2) \circ c + \mu(\alpha(x,x)) T(x) \circ c \\&+ \mu(\delta(x,x,x)) c \circ c.
\end{split}
\]
Since, again by \eqref{eb}, $T(x^2) \circ c = T(x)\circ T(x)$, it follows that
\begin{equation}
\big( T(x^2) - \mu(\tau(x)) T(x) - \mu(\alpha(x,x)) c \big) \circ T(x) = 2\mu(\delta(x,x,x)) c^2.
\label{dd}
\end{equation}

Define $f \colon A^2 \to A$ by
\[
f(x,y) = \dfrac{1}{2} T(x \circ y) - \dfrac{1}{2} \mu(\tau(x)) T(y) - \dfrac{1}{2} \mu(\tau(y)) T(x) - \mu(\alpha(x,y))c.
\]
Observe that $f$ is a symmetric biadditive map  which, by \eqref{dd}, satisfies
\begin{equation}\label{ss}
f(x,x) \circ T(x) = 2\mu(\delta(x,x,x)) c^2.
\end{equation}
Linearizing this identity we obtain
\begin{equation}\label{eq.fxyTz}
f(x,y) \circ T(z) + f(z,x) \circ T(y) + f(y,z) \circ T(x) = 6\mu(\delta(x,y,z)) c^2.
\end{equation}
By \eqref{ss}, 
\[
f(b,b) = \dfrac{1}{2} f(b,b) \circ T(b) = \mu(\delta(b,b,b)) c^2 = \gamma c^2.
\]
Putting  $y=z=b$ in  \eqref{eq.fxyTz}   we obtain
\[
4f(x,b) +  \gamma c^2 \circ T(x) = 6 \mu(\delta(x,b,b)) c^2.
\]
Next, applying \eqref{eq.fxyTz} with $y=x$ and $z=b$ we arrive at
\[
f(x,x) + f(x,b) \circ T(x) = 3\mu(\delta(x,x,b)) c^2.
\]
The last two identities yield
$$
f(x,x) =  \Big( \frac{\gamma}{4}  c^2 \circ T(x) - \frac{3}{2} \mu(\delta(x,b,b)) c^2\Big)\circ T(x) + 3\mu(\delta(x,x,b)) c^2.
$$
Returning to \eqref{ss}, we now have
\begin{align*} \frac{\gamma}{4}  \big((c^2 \circ T(x))\circ  T(x)\big)\circ T(x)  - &\frac{3}{2} \mu(\delta(x,b,b)) (c^2 \circ T(x))\circ T(x)\\
+&  3\mu(\delta(x,x,b)) c^2\circ T(x)= 2\mu(\delta(x,x,x)) c^2.
\end{align*}
Since $\gamma\ne 0$ and $T$ is surjective, this shows that for each $y\in A$, the set $$\{c^2, c^2 \circ y, (c^2 \circ y) \circ y, ((c^2 \circ y) \circ y) \circ y \}$$ is linearly dependent. We will now use the  fact known from the theory of polynomial identities that
the linear dependence can be characterized through a special identity, see \cite[Theorem 7.45]{INCA}. Denoting by  $c_4$  the 4th {\em Capelli polynomial}, this theorem implies that
$$c_4\big( c^2, c^2 \circ y, (c^2 \circ y) \circ y, ((c^2 \circ y) \circ y) \circ y , x_1,x_2,x_3\big)=0$$
for all $y,x_1,x_2,x_3\in A$. Since $c_4$ is multilinear, the linearization of this identity gives
$$\sum_{\sigma\in S_{6}}c_4\big( c^2, c^2 \circ y_{\sigma(1)}, (c^2 \circ y_{\sigma(2)}) \circ y_{\sigma(3)}, ((c^2 \circ y_{\sigma(4)}) \circ y_{\sigma(5)}) \circ y_{\sigma(6)} , x_1,x_2,x_3\big)=0$$
for all $x_i,y_j\in A$, $i=1,2,3$, $j=1,\dots,6$. Let $K$ be the algebraic closure of $Z$ and let $\overline{A}= K\otimes_Z A$. 
Since each $x_i$ and each $y_j$ occurs linearly in the last identity, it follows that $t=1\otimes c^2\in \overline{A}$ satisfies
$$\sum_{\sigma\in S_{6}}c_4\big( t, t \circ y_{\sigma(1)}, (t \circ y_{\sigma(2)}) \circ y_{\sigma
(3)}, ((t \circ y_{\sigma(4)}) \circ y_{\sigma(5)}) \circ y_{\sigma(6)} , x_1,x_2,x_3\big)=0$$
for all $x_i,y_j\in \overline{A}$, $i=1,2,3$, $j=1,\dots,6$. Take each $y_i$ to be equal to $y$. Our characteristic assumption implies that $6!u=0$ with $u\in \overline{A}$ implies $u=0$, so we have
$$c_4\big( t, t \circ y, (t \circ y) \circ y, ((t \circ y) \circ y) \circ y , x_1,x_2,x_3\big)=0$$
for all $x_i,y\in A$, $i=1,2,3$.  We may now again use  \cite[Theorem 7.45]{INCA}, this time in the opposite direction, to conclude that the set
$$\{t, t \circ y, (t \circ y) \circ y, ((t \circ y) \circ y) \circ y \}$$ is linearly dependent for every $y \in \overline{A}$.
Since $\overline{A}\cong M_3(K)$ \cite[Theorem 4.39]{INCA},
 Lemma \ref{m1} shows that $t$ lies in the center of $\overline{A}$. Consequently, $c^2\in Z$ and so Lemma \ref{l0} tells us that $T$ is a weighted Jordan homomorphism.\end{proof}

We can now state  the main result of this section.

\begin{theorem}\label{mt3}
Let $A$ be a  prime ring with {\rm char}$(A)\ne 2,3,5$ and let $S,T:A\to A$ be additive maps such that $S(x^2)=T(x)^2$ for every $x\in A$. If $T$ is bijective, then it is a weighted Jordan homomorphism. 
\end{theorem}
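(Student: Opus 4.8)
The plan is to prove the theorem by splitting into cases according to the value of $\deg(A)$, the degree of $A$ over its extended centroid, and to let the lemmas established above settle each range; since $\deg(A)$ ranges over $\{1,2,3,\dots\}\cup\{\infty\}$, the four results Lemma~\ref{ldeg1}, Lemma~\ref{ldeg2}, Lemma~\ref{l9} and Lemma~\ref{l1} between them cover every possibility. It is worth recording at the outset that the extra hypotheses char$(A)\neq 3,5$ will be used only in the degree-$3$ case, while char$(A)\neq 2$ (whence $A$ is $2$-torsion free) underlies the entire section.

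First I would dispose of the two extreme ranges, for which no structure theory is needed. If $\deg(A)\geq 4$ --- in particular if $A$ fails to be a PI-ring, so that $\deg(A)=\infty$ --- then Lemma~\ref{l1} applies directly and $T$ is a weighted Jordan homomorphism. If $\deg(A)=1$, then $A$ is commutative, so $c=T(1)$ is automatically central and Lemma~\ref{ldeg1} (through Lemma~\ref{l0}, using surjectivity of $T$ and $2$-torsion freeness of $A$) gives the conclusion.

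The substance lies in the PI range $\deg(A)\in\{2,3\}$, where I would run the reduction to a central division algebra recorded above. As $\deg(A)<\infty$, Posner's theorem \cite[Theorem~7.58]{INCA} makes $Q_Z(A)$ a finite-dimensional central simple algebra over the field of fractions of $Z$, of dimension $4$ or $9$ over that field according as $\deg(A)$ is $2$ or $3$; Lemma~\ref{lred} extends $S,T$ to additive maps $\mathcal{S},\mathcal{T}$ on $Q_Z(A)$ satisfying $\mathcal{S}(q^2)=\mathcal{T}(q)^2$, and there is no loss of generality in replacing $A,S,T$ by $Q_Z(A),\mathcal{S},\mathcal{T}$. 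By Wedderburn's structure theorem this algebra is either a matrix ring $M_n(D)$ with $n\geq 2$, or a division ring. In the matrix case $\mathcal{T}$ satisfies the implication \eqref{zj}, since $qq'=q'q=0$ forces $q\circ q'=0$ and hence $\mathcal{T}(q)\circ\mathcal{T}(q')=\mathcal{S}(q\circ q')=0$, so Corollary~\ref{c1} applies. In the division-ring case $A$ is a central division algebra of degree $2$ or $3$, and Lemma~\ref{ldeg2} or Lemma~\ref{l9} (the latter absorbing char$(A)\neq 3,5$) applies. In every sub-case $c^2$ comes out central; since $c=T(1)$ lies in $A$ and commutes with all of $Q_Z(A)\supseteq A$ this forces $c^2\in Z$, whereupon Lemma~\ref{l0} returns the conclusion for the original map $T$.

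Within this assembly I expect the only delicate point to be the reduction bookkeeping: confirming that $\mathcal{T}$ inherits bijectivity from $T$ and that the property of being a weighted Jordan homomorphism is insensitive to the passage from $A$ to $Q_Z(A)$ --- the latter being clean precisely because centrality of $c^2$ in the larger algebra descends to $c^2\in Z$. The genuine mathematical weight of the whole theorem of course sits inside the degree-$3$ case, Lemma~\ref{l9}, which channels the problem through the Capelli polynomial and a base change to $M_3(K)$ into the linear-algebraic Lemma~\ref{m1}; but for the present statement that work is already available to be cited.
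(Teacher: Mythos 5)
Your proposal is correct and takes essentially the same route as the paper: the paper's proof of Theorem \ref{mt3} is exactly the case division by $\deg(A)$, citing Lemmas \ref{l1}, \ref{ldeg1}, \ref{ldeg2} and \ref{l9}, with the reduction for $\deg(A)\in\{2,3\}$ (Posner's theorem, Lemma \ref{lred}, Corollary \ref{c1} for the matrix case, Wedderburn's theorem to reach a division ring, and descent of centrality of $c$ back to $Z$ via Lemma \ref{l0}) carried out in the text preceding those two lemmas precisely as you describe. The only difference is presentational: you unpack that reduction bookkeeping inside the proof of the theorem, whereas the paper performs it before stating Lemmas \ref{ldeg2} and \ref{l9} and then gives a one-line proof of the theorem.
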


\begin{proof} Apply Lemmas \ref{l1}, \ref{ldeg1}, \ref{ldeg2}, and \ref{l9}.
\end{proof}

\begin{remark}
The conclusion of Theorem \ref{mt3} is that $T(x)=c\Phi(x)$ where $\Phi$ is a Jordan automorphism of $A$. It is well known that, since $A$ is prime, $\Phi$ is either an automorphism or an antiautomorphism \cite{Her}.
\end{remark}
 
\begin{remark}\label{rtr}
The injectivity of $T$ was used only once in the proof of Theorem \ref{mt3}, that is, when showing that $\mu$ is injective. If $Z$ is a field, in particular if $A$ is simple, then $\mu$ is automatically injective and so we may weaken the assumption that  $T$ is bijective to $T$ being only surjective.
\end{remark}

In our final result we return to the condition studied in Section \ref{s2}.

\begin{corollary}\label{c12}Let $A$ be a  unital  simple ring with {\rm char}$(A)\ne 2,3,5$. If $A$ contains a nontrivial idempotent, then 
every  surjective additive map $T:A\to A$ such that for all $x,y\in A$,
	$$xy=yx=0 \implies T(x) \circ T(y) = 0,$$
 is a weighted Jordan homomorphism.
\end{corollary}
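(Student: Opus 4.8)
The plan is to reduce the hypothesis to the setting of Theorem \ref{mt3} and then invoke that theorem. First I would record the structural facts about $A$: being simple and unital, $A$ is prime and its center $Z$ is a field, so in particular $\frac{1}{2}\in Z\subseteq A$ (as $\mathrm{char}(A)\ne 2$), and, by Remark \ref{rtr}, it is enough to work with a surjective rather than bijective $T$. Exactly as in the proof of Theorem \ref{tidem}, I set $\varphi(x,y)=T(x)\circ T(y)$; this is a symmetric biadditive map $A\times A\to A$ that vanishes whenever $xy=yx=0$, by the assumed implication \eqref{zj}.

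The key step is to verify that $A$ is generated as a ring by idempotents, so that Proposition \ref{pz}(b) applies. Let $e$ be the given nontrivial idempotent, put $f=1-e$, and use the Peirce decomposition $A=eAe\oplus eAf\oplus fAe\oplus fAf$. For $a\in eAf$ one checks that $e+a$ is an idempotent (using $ae=0$ and $a^2=0$), so $a=(e+a)-e$ lies in the subring $A_0$ generated by the idempotents of $A$; thus $eAf\subseteq A_0$, and symmetrically $fAe\subseteq A_0$. Since $A$ is simple and $e,f\ne 0$, we have $AeA=AfA=A$, whence $eAe=(eAf)(fAe)$ and $fAf=(fAe)(eAf)$ are contained in $A_0$ as well, giving $A_0=A$. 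Proposition \ref{pz}(b) then yields $2\varphi(x,y)=\varphi(x\circ y,1)$, that is, the identity \eqref{eb}:
\[
T(x\circ y)\circ c=2\,T(x)\circ T(y)\qquad(x,y\in A),
\]
where $c=T(1)$.

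Finally I would manufacture the second map. Defining $S\colon A\to A$ by $S(x)=\frac{1}{2}\bigl(T(x)\circ c\bigr)$, the identity \eqref{eb} becomes $2S(x\circ y)=2\,T(x)\circ T(y)$, and since $A$ is $2$-torsion free this gives $S(x\circ y)=T(x)\circ T(y)$; taking $y=x$ produces $S(x^2)=T(x)^2$ for all $x\in A$. Thus $S$ and the surjective map $T$ satisfy the hypotheses of Theorem \ref{mt3} on the prime ring $A$ with $\mathrm{char}(A)\ne 2,3,5$, and, using Remark \ref{rtr} to replace bijectivity by surjectivity (valid since $Z$ is a field), I conclude that $T$ is a weighted Jordan homomorphism. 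I expect the generation-by-idempotents step to be the main obstacle: once it is in place everything else is bookkeeping and an appeal to the earlier machinery, whereas it is precisely the presence of a single nontrivial idempotent, combined with simplicity, that is needed to put Proposition \ref{pz}(b) to work and thereby reach \eqref{eb}.
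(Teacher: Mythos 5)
Your proof is correct and follows essentially the same route as the paper: show $A$ is generated by idempotents, run the opening argument of Theorem \ref{tidem} via Proposition \ref{pz}(b) to obtain \eqref{eb}, convert it to $S(x^2)=T(x)^2$ with $S(x)=\frac{1}{2}\bigl(T(x)\circ c\bigr)$, and conclude with Theorem \ref{mt3} together with Remark \ref{rtr}. The only difference is that where the paper cites Herstein's book for the fact that a simple ring with a nontrivial idempotent is generated by its idempotents, you prove it directly with a correct Peirce-decomposition argument ($e+a$ is idempotent for $a\in eAf$, and simplicity gives $eAe\subseteq A_0$, $fAf\subseteq A_0$), which makes the proof self-contained.
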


\begin{proof}
It is well known that the  existence of one nontrivial idempotent in a simple ring $A$ implies that $A$ is generated by idempotents \cite[Corollaries on p.\ 9 and p.\ 18]{Her1}. We can therefore repeat the argument from the beginning of the proof of Theorem \ref{tidem} and conclude that $T$ satisfies condition \eqref{eb}, which is of course an equivalent  version of the condition $S(x^2)=T(x)^2$ studied in Theorem \ref{mt3}. As pointed out in Remark \ref{rtr}, in this setting the injectivity of $T$ is not needed for reaching the conclusion that $T$ is a weighted Jordan homomorphism.
\end{proof}

Theorems \ref{tidem} and \ref{mt3} show that in quite general rings,
 weighted Jordan homomorphisms
 are the only bijective additive  maps $T$ with the property that  $S(x^2)=T(x)^2$ for some additive map $S$.
 We conclude the paper with an example showing that there exist rings in which this does not hold.

\begin{example}
Let $A$ be the Grassmann algebra in two generators over a field $F$ with char$(F)\ne 2$. That is, $A$ is the $4$-dimensional algebra with basis $1,u,v,uv$ where $u^2=v^2=uv+vu=0$. For each $x\in A$, let $\lambda(x)$ be the element in $F$ satisfying
$x-\lambda(x)1\in {\rm span}\{u,v,uv\}$. Note that $x\mapsto \lambda(x) $ is an algebra homomorphism from $A$ to $F$ and that $x\circ u=2\lambda(x)u$ for every $x\in A$.
Define $S,T:A\to A$ by
 $$T(x)= x  + \lambda(x)u,\quad S(x)= x  + 2\lambda(x)u.$$ 
Then $S$ and $T$ are linear maps, $T$ is bijective, and 
\begin{align*}
S(x^2)=& x^2  + 2\lambda(x^2)u = x^2 + 2\lambda(x)^2u\\ = &x^2 + \lambda(x) (x\circ u)= (x+ \lambda(x)u)^2 = T(x)^2
\end{align*}
for every $x\in A$.
However, $T(1)= 1+u$ is not a central element, so $T$ is not a weighted Jordan homomorphism.
\end{example}

\end{document}